\newtheorem{theorem}{Theorem}
\newtheorem{corollary}[theorem]{Corollary}
\newtheorem{lemma}[theorem]{Lemma}
\newtheorem{proposition}[theorem]{Proposition}
\newtheorem{remark}[theorem]{Remark}
\newenvironment{proof}[1][Proof]{\noindent\textbf{#1.} }{\ \rule{0.5em}{0.5em}}
\begin{document}

\title{Dirichlet and Neumann problems\\for Klein-Gordon-Maxwell systems \thanks{The authors are supported by M.I.U.R.
- P.R.I.N. ``Metodi variazionali e topologici nello studio di fenomeni non
lineari''.}}
\author{Pietro d'Avenia \thanks{Dipartimento di Matematica, Politecnico di Bari, Via
Orabona, 4, 70125 Bari (Italy), \texttt{p.davenia@poliba.it}}
\and Lorenzo Pisani \thanks{Dipartimento di Matematica, Universit\`a degli Studi di
Bari, Via Orabona, 4, 70125 Bari (Italy), \texttt{pisani@dm.uniba.it,
siciliano@dm.uniba.it}}
\and Gaetano Siciliano \addtocounter{footnote}{-1} \footnotemark}
\date{}
\maketitle

\begin{abstract}
This paper deals with the Klein-Gordon-Maxwell system in a bounded spatial
domain. We study the existence of solutions having a specific form, namely
standing waves in equilibrium with a purely electrostatic field. We prescribe
Dirichlet boundary conditions on the matter field, and either Dirichlet or
Neumann boundary conditions on the electric potential.

MSC2000: 35J55, 35J65, 35J50, 35Q40, 35Q60.

\end{abstract}

\section{Introduction}

In this paper we pursue the investigation of the existence and multiplicity of
solutions for a class of Klein-Gordon-Maxwell (KGM for short) systems in a
bounded spatial domain.

Let us recall the general setting for KGM systems. We are concerned with a
matter field $\psi$, whose free Lagrangian density is given by%
\begin{equation}
\mathcal{L}_{KG}=\frac{1}{2}\left(  \left\vert \partial_{t}\psi\right\vert
^{2}-\left\vert \nabla\psi\right\vert ^{2}-m^{2}\left\vert \psi\right\vert
^{2}\right)  , \label{l0}%
\end{equation}
with $m>0$. The field is charged and in equilibrium with its own
electromagnetic field $(\mathbf{E},\mathbf{B})$, represented by means of the
gauge potentials $(\phi,\mathbf{A})$,%
\[
\mathbf{E}=-\left(  \nabla\phi+{\partial_{t}\mathbf{A}}\right)  ,\qquad
\mathbf{B}=\nabla\times\mathbf{A}.
\]
Abelian gauge theories provide a model for the interaction: formally we
replace the ordinary derivatives $\left(  \partial_{t},\nabla\right)  $ in
(\ref{l0}) with the so-called gauge covariant derivatives
\[
\left(  \partial_{t}+iq\phi,\nabla-iq\mathbf{A}\right)  ,
\]
where $q$ is a \emph{nonzero} coupling constant (see \emph{e.g.} \cite{fel}).
Since the electromagnetic field is not prescribed, the total Lagrangian
density contains also the term%
\[
\mathcal{L}_{M}=\frac{1}{8\pi}\left(  \left\vert \mathbf{E}\right\vert
^{2}-\left\vert \mathbf{B}\right\vert ^{2}\right)  .
\]
The KGM system is given by the Euler-Lagrange equations corresponding to the
Lagrangian density
\begin{align*}
\mathcal{L}_{KGM}  &  =\mathcal{L}_{KG}(\psi,\phi,\mathbf{A})+\mathcal{L}%
_{M}(\phi,\mathbf{A})=\\
&  =\frac{1}{2}\left(  \left\vert \left(  \partial_{t}+iq\phi\right)
\psi\right\vert ^{2}-\left\vert \left(  \nabla-iq\mathbf{A}\right)
\psi\right\vert ^{2}-m^{2}\left\vert \psi\right\vert ^{2}\right)  +\\
&  +\frac{1}{8\pi}\left(  \left\vert \nabla\phi+{\partial_{t}\mathbf{A}%
}\right\vert ^{2}-\left\vert \nabla\times\mathbf{A}\right\vert ^{2}\right)  .
\end{align*}

The study of KGM systems is usually carried out for special classes of
solutions (and for some classes of lower-order nonlinear perturbations in
$\mathcal{L}_{KG}$). Here we consider%
\[
\psi=u(x)e^{-i\omega t},\qquad\phi=\phi\left(  x\right)  ,\qquad
\mathbf{A}=\mathbf{0},
\]
that is, a standing wave in equilibrium with a purely electrostatic field
\[
\mathbf{E}=-\nabla\phi\left(  x\right)  ,\qquad\mathbf{B}=\mathbf{0}.
\]
Under this ansatz, the KGM system reduces to
\begin{equation}
\left\{
\begin{array}
[c]{l}%
-\Delta u-\left(  q\phi-\omega\right)  ^{2}u+m^{2}u=0,\\
\Delta\phi=4\pi q\left(  q\phi-\omega\right)  u^{2}%
\end{array}
\right.  \tag{S0}\label{Main0}%
\end{equation}
(see \cite{bf2} where the set of equations in the general case has been derived).

We will study (\ref{Main0}) in a bounded domain $\Omega\subset\mathbf{R}^{3}$
with smooth boundary $\partial\Omega$. The unknowns are the real functions $u$
and $\phi$ defined on $\Omega$ and the frequency $\omega\in\mathbf{R}$. We are
interested in finding \emph{nontrivial} solutions, that is, solutions such
that $u\neq0$. We shall consider two different boundary conditions, specifically,

\begin{itemize}
\item either Dirichlet boundary conditions%
\begin{equation}
\left\{
\begin{array}
[c]{l}%
u=h\\
\phi=\zeta
\end{array}
\right.  \quad\text{on }\partial\Omega, \tag{D0}\label{Dir0}%
\end{equation}

\item or \textquotedblleft mixed\textquotedblright\ boundary conditions%
\begin{equation}
\left\{
\begin{array}
[c]{l}%
u=h\\
\frac{\partial\phi}{\partial\mathbf{n}}=\theta
\end{array}
\right.  \quad\text{on }\partial\Omega, \tag{M0}\label{Mix0}%
\end{equation}
that is, Dirichlet boundary conditions on $u$ and Neumann boundary conditions
on $\phi$,
\end{itemize}

\noindent where $h$, $\zeta$ and $\theta$ are smooth functions defined on the
boundary $\partial\Omega$.

With $q\neq0$, the change of variables%
\begin{equation}
\left\{
\begin{array}
[c]{l}%
u_{q}=\sqrt{4\pi}q\,u,\\
\phi_{q}=q\phi-\omega
\end{array}
\right.  \label{ch var}%
\end{equation}
transforms the system (\ref{Main0}) and the boundary conditions (\ref{Dir0})
and (\ref{Mix0}) into%
\begin{equation}
\left\{
\begin{array}
[c]{l}%
-\Delta u_{q}-\phi_{q}^{2}u_{q}+m^{2}u_{q}=0,\\
\Delta\phi_{q}=\phi_{q}u_{q}^{2},
\end{array}
\right.  \tag{S1}\label{Main1}%
\end{equation}%
\begin{equation}
\left\{
\begin{array}
[c]{l}%
u_{q}=\sqrt{4\pi}q\,h\\
\phi_{q}=q\,\zeta-\omega
\end{array}
\right.  \quad\text{on }\partial\Omega,\tag{D1}\label{Dir1}%
\end{equation}%
\begin{equation}
\left\{
\begin{array}
[c]{l}%
u_{q}=\sqrt{4\pi}q\,h\\
\frac{\partial\phi_{q}}{\partial\mathbf{n}}=q\,\theta
\end{array}
\right.  \quad\text{on }\partial\Omega,\tag{M1}\label{Mix1}%
\end{equation}
respectively.

First we study problem (\ref{Main1})-(\ref{Dir1}). Let $\left\{  \lambda
_{k}\right\}  $ denote the eigenvalues of $-\Delta$ with homogeneous Dirichlet
boundary conditions.

\begin{theorem}
\label{th1}Assume%
\begin{equation}
\left\Vert q\zeta-\omega\right\Vert _{\infty}^{2}<m^{2}+\lambda_{1}.
\label{hyp}%
\end{equation}

\begin{enumerate}
\item If $h\neq0$, the problem (\ref{Main1})-(\ref{Dir1}) has a nontrivial solution.

\item If $h=0$, the problem (\ref{Main1})-(\ref{Dir1}) has no nontrivial solutions.
\end{enumerate}
\end{theorem}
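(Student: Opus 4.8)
The plan is to treat both parts through the reduction that solves the second (linear) equation for $\phi_q$ in terms of $u_q$, and to extract from it a single a priori bound that drives everything. For fixed $u_q\in H^1(\Omega)$, the problem $-\Delta\phi_q+u_q^2\phi_q=0$ in $\Omega$ with $\phi_q=q\zeta-\omega$ on $\partial\Omega$ is linear and coercive (the bilinear form $\int\nabla\phi\cdot\nabla\psi+\int u_q^2\phi\psi$ is positive on $H_0^1(\Omega)$), so by Lax--Milgram it has a unique solution $\phi_q=\Phi(u_q)$. Since the zero-order coefficient $u_q^2$ is nonnegative, the weak maximum principle yields the crucial estimate $\|\Phi(u_q)\|_\infty\le\|q\zeta-\omega\|_\infty$, so that $\Phi(u_q)^2\le\|q\zeta-\omega\|_\infty^2<m^2+\lambda_1$ everywhere, by hypothesis~\eqref{hyp}.

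I would dispatch part~2 first, as it only uses this bound. If $h=0$ then $u_q=0$ on $\partial\Omega$, so $u_q\in H_0^1(\Omega)$; testing the first equation with $u_q$ gives
\begin{equation*}
\int_\Omega|\nabla u_q|^2+\int_\Omega\bigl(m^2-\phi_q^2\bigr)u_q^2=0 .
\end{equation*}
Using $\phi_q^2\le\|q\zeta-\omega\|_\infty^2$ together with the Poincar\'e inequality $\int|\nabla u_q|^2\ge\lambda_1\int u_q^2$ yields
\begin{equation*}
0\ge\bigl(\lambda_1+m^2-\|q\zeta-\omega\|_\infty^2\bigr)\int_\Omega u_q^2,
\end{equation*}
and since the factor is strictly positive by~\eqref{hyp} we get $u_q\equiv0$: the only solution is trivial.

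For part~1 I would use the variational reduction in the spirit of Benci--Fortunato. Consider
\begin{equation*}
E(u_q,\phi_q)=\frac12\int_\Omega\bigl(|\nabla u_q|^2+m^2u_q^2-\phi_q^2u_q^2-|\nabla\phi_q|^2\bigr),
\end{equation*}
defined on the affine spaces fixed by the boundary data in~\eqref{Dir1}; its Euler--Lagrange equations are exactly~\eqref{Main1}. Since $\phi_q\mapsto E(u_q,\phi_q)$ is strictly concave and tends to $-\infty$ as $\|\phi_q\|_{H^1}\to\infty$, its unique maximizer is precisely $\Phi(u_q)$, and the reduced functional $J(u_q):=E(u_q,\Phi(u_q))$ has the property that its critical points, coupled with $\phi_q=\Phi(u_q)$, solve the full system. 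The reduction makes coercivity of $J$ cheap: choosing a fixed extension $H$ of $q\zeta-\omega$ with $\|H\|_\infty\le\|q\zeta-\omega\|_\infty$ (e.g.\ a truncated extension), maximality gives $J(u_q)\ge E(u_q,H)$, and
\begin{equation*}
E(u_q,H)\ge\frac12\int_\Omega|\nabla u_q|^2+\frac{m^2-\|q\zeta-\omega\|_\infty^2}{2}\int_\Omega u_q^2-\frac12\int_\Omega|\nabla H|^2 .
\end{equation*}
Writing $u_q=w+U_0$ with $w\in H_0^1(\Omega)$ and $U_0$ a fixed extension of $\sqrt{4\pi}\,q\,h$, the Poincar\'e inequality and~\eqref{hyp} make the quadratic form in $w$ positive definite, so $J$ is coercive on the affine space. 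A minimizer then exists and is a nontrivial solution, nontriviality being forced by the boundary datum, since $h\neq0$ and $q\neq0$ give $u_q\not\equiv0$ on $\partial\Omega$.

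The main obstacle is the compactness needed to produce the minimizer, i.e.\ the weak lower semicontinuity of $J$. The terms $\frac12\int|\nabla u_q|^2$ and $-\frac12\int|\nabla\Phi(u_q)|^2$ are, respectively, weakly lower and (being a negative square norm) weakly upper semicontinuous, so each carries the correct sign; the genuinely delicate point is to pass to the limit in the coupling term $\int\Phi(u_q)^2u_q^2$ and in $\int|\nabla\Phi(u_q)|^2$ along a minimizing sequence $u_{q,n}\rightharpoonup u_q$. This reduces to the continuity of the map $u_q\mapsto\Phi(u_q)$ under weak $H^1$ convergence, which I would establish by testing the equations satisfied by $\Phi(u_{q,n})-\Phi(u_q)$, exploiting the uniform bound $\|\Phi(u_{q,n})\|_\infty\le\|q\zeta-\omega\|_\infty$ and the strong $L^p$ convergence ($p<6$) $u_{q,n}\to u_q$ granted by the compact Sobolev embedding, to obtain $\Phi(u_{q,n})\to\Phi(u_q)$ strongly in $H^1(\Omega)$. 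Once this continuity is in hand, both problematic terms pass to the limit and $J$ attains its infimum.
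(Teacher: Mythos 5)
Your proposal is correct, and its skeleton is the same as the paper's: the Benci--Fortunato reduction to a functional of the matter field alone, the uniform bound $\|\phi\|_\infty\le\|q\zeta-\omega\|_\infty$ from the maximum principle, coercivity of the reduced functional from this bound plus the Poincar\'e inequality and (\ref{hyp}), minimization for part 1; and, for part 2, testing the first equation with $u\in H_0^1(\Omega)$ and using the same two ingredients. The differences are in implementation. (i) The paper first homogenizes the boundary data ($u=v+U$, $\phi=\varphi+\Phi_D$ with $\Phi_D$ harmonic) and works in $H_0^1(\Omega)\times H_0^1(\Omega)$, whereas you stay on the affine trace spaces; your comparison function $H$ plays exactly the role of $\Phi_D$. (ii) Your $L^\infty$ estimate is the weak maximum principle for $-\Delta+u^2$ applied to $\phi$ itself, while Lemma \ref{stimaphi} proves the same bound for the shifted unknown $\varphi_v$ by a truncation--comparison argument: same content. (iii) The genuine divergence is the compactness mechanism: the paper proves the Palais--Smale condition (the right-hand side of the equation for $\Delta v_n$ is bounded in $H^{-1}(\Omega)$, so the compact inverse Laplacian upgrades weak to strong convergence, see Proposition \ref{nice}) and then gets a minimum by a standard argument, whereas you run the direct method, establishing weak lower semicontinuity of $J$ via strong $H^1$-continuity of $u\mapsto\Phi(u)$ along weakly convergent sequences; your subtraction-and-test argument, using the uniform $L^\infty$ bound and the compact Sobolev embedding, does deliver this, so your route is self-contained, while the paper's PS machinery is reused later for the nonlinear Theorem \ref{thnonlin}. (iv) In part 2 the paper substitutes the identity (\ref{varphiquad}) and needs only the sup bound on $\Phi_D$, while you test directly and use the bound on the full $\phi$; yours is slightly shorter. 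Two small cautions: the parenthetical claim that $-\frac12\int_\Omega|\nabla\Phi(u)|^2\,dx$ is weakly upper semicontinuous and therefore ``carries the correct sign'' is backwards --- for minimization, upper semicontinuity is the wrong direction, and it is precisely your strong-convergence step that repairs this, so that sentence should be dropped rather than relied on; and, like the paper, you assert without proof that minimizers of the reduced functional solve the full system, which tacitly uses the $C^1$ dependence of $\Phi(u)$ on $u$ --- this is at the same level of detail as the paper's statement that $v\mapsto\varphi_v$ is $C^1$.
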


It is immediately seen that (\ref{Main1})-(\ref{Dir1}) has a trivial solution
if and only if $h=0$. Hence from Theorem \ref{th1} we deduce the following

\begin{corollary}
Assume (\ref{hyp}). Then problem (\ref{Main1})-(\ref{Dir1}) has a solution.
This solution is trivial if and only if $h=0$.
\end{corollary}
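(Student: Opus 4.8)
The plan is to obtain this corollary as an immediate consequence of Theorem~\ref{th1}, once the elementary remark preceding the statement is made precise: a \emph{trivial} solution, that is one with $u_q\equiv0$, exists if and only if $h=0$. First I would verify this equivalence. If $u_q\equiv0$, then the boundary condition $u_q=\sqrt{4\pi}\,q\,h$ on $\partial\Omega$, together with $q\neq0$, forces $h=0$. Conversely, when $h=0$ the choice $u_q\equiv0$ satisfies the first equation of (\ref{Main1}) identically, and the second equation collapses to the Laplace problem $\Delta\phi_q=0$ in $\Omega$ with Dirichlet datum $\phi_q=q\,\zeta-\omega$ on $\partial\Omega$; by standard elliptic theory this harmonic extension exists and is unique, so the pair $(0,\phi_q)$ is a genuine (trivial) solution. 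Hence the existence of a trivial solution is equivalent to $h=0$.

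I would then split the argument into the two cases according to $h$. If $h\neq0$, Theorem~\ref{th1}(1) furnishes a nontrivial solution, which settles existence; moreover, by the equivalence just established no trivial solution can exist when $h\neq0$, so \emph{every} solution of (\ref{Main1})-(\ref{Dir1}) is nontrivial in this case. If instead $h=0$, the harmonic extension $(0,\phi_q)$ constructed above is a solution, again settling existence, while Theorem~\ref{th1}(2) rules out any nontrivial solution; thus in this case \emph{every} solution is trivial. Combining the two cases gives both assertions of the corollary at once: under hypothesis (\ref{hyp}) a solution always exists, and the solution is trivial precisely when $h=0$.

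The point I would stress is that there is essentially no analytic obstacle here, since all the substance is carried by Theorem~\ref{th1}, which we are entitled to invoke; the corollary is really a repackaging of it together with the boundary-value observation. The only items needing (routine) care are the well-posedness of the harmonic extension defining the trivial solution and the logical bookkeeping, so that the biconditional ``trivial $\iff h=0$'' is correctly read as a statement about \emph{every} solution of the problem rather than about one distinguished solution.
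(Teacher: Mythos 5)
Your proposal is correct and follows exactly the paper's own route: the paper likewise observes that a trivial solution of (\ref{Main1})-(\ref{Dir1}) exists if and only if $h=0$ (the boundary condition forces this, and for $h=0$ the harmonic extension supplies $\phi_q$), and then combines this with the two parts of Theorem \ref{th1}. Your write-up merely makes explicit the case-splitting and well-posedness details that the paper compresses into the sentence preceding the corollary.
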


\begin{remark}
If we assume%
\begin{equation}
\omega^{2}<m^{2}+\lambda_{1}\text{,} \label{hyp0}%
\end{equation}
then (\ref{hyp}) is satisfied whenever $\left\vert q\right\vert $ is
sufficiently small. Then it is interesting to study the limit case $q=0$.

Since the change of variables (\ref{ch var}) is not allowed for $q=0$, we
consider the \textquotedblleft original\textquotedblright\ problem
(\ref{Main0})-(\ref{Dir0}). Being uncoupled, it can be splitted into%
\begin{equation}
\left\{
\begin{array}
[c]{ll}%
-\Delta u-\left(  \omega^{2}-m^{2}\right)  u=0,\quad & \\
u=h & \text{on }\partial\Omega
\end{array}
\right.  \label{split u}%
\end{equation}
and%
\begin{equation}
\left\{
\begin{array}
[c]{ll}%
\Delta\phi=0,\quad & \\
\phi=\zeta & \text{on }\partial\Omega.
\end{array}
\right.  \label{split phi}%
\end{equation}
Problem (\ref{split phi}) has a unique solution (independent of $u$). The
existence and uniqueness of solutions of problem (\ref{split u}) depend on the
value $\omega^{2}-m^{2}$. If $\omega^{2}-m^{2}\neq\lambda_{k}$ (in particular
if (\ref{hyp0}) holds), then problem (\ref{split u}) has a unique solution;
this solution is nontrivial if and only if $h\neq0$. Hence, at least in the
case (\ref{hyp0}), the existence of a (nontrivial) solution depends
continuously\ on $q$, as $q\rightarrow0$.
\end{remark}

Now we address problem (\ref{Main1})-(\ref{Mix1}).

First we notice that, after the change of variables (\ref{ch var}), which is
valid for $q\neq0$, the system does not depend on the frequency $\omega$.
Hence, for any $\omega\in\mathbf{R}$, the existence of a standing wave
$\psi=u(x)e^{-i\omega t}$ in equilibrium with a purely electrostatic field is
equivalent to the existence of a static matter field $\psi=u\left(  x\right)
$, in equilibrium with the same electric field.

\begin{theorem}
\label{thmix}If $\left\vert q\right\vert \!\left\Vert \theta\right\Vert
_{H^{1/2}\left(  \partial\Omega\right)  }$ is sufficiently small and $h\neq0$,
then there exists a nontrivial solution of (\ref{Main1})-(\ref{Mix1}).
\end{theorem}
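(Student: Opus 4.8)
The plan is to decouple the system by solving the second equation for $\phi_q$ in terms of $u_q$, and then to reduce the whole problem to a single fixed-point equation for $u_q$ which, for small $|q|\,\|\theta\|_{H^{1/2}(\partial\Omega)}$, is a contraction near the solution of the unperturbed ($\theta=0$) problem. First I fix the function $u_0\in H^1(\Omega)$ solving
\begin{equation*}
-\Delta u_0+m^2 u_0=0\ \text{ in }\Omega,\qquad u_0=\sqrt{4\pi}\,q\,h\ \text{ on }\partial\Omega .
\end{equation*}
Since $m^2>0$ there is no resonance ($m^2+\lambda_k>0$ for every $k$), so $u_0$ exists, is unique, and is nonzero precisely when $h\neq0$. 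Writing the first equation of (\ref{Main1}) as $(-\Delta+m^2)u_q=\phi_q^2u_q$ with the same Dirichlet datum, its solution is $u_q=u_0+K[\phi_q^2u_q]$, where $K=(-\Delta+m^2)^{-1}\colon L^2(\Omega)\to H^2(\Omega)\cap H^1_0(\Omega)$ is the homogeneous-Dirichlet solution operator.

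Next I solve the second equation. For fixed $u_q\not\equiv0$, the problem $-\Delta\phi_q+u_q^2\phi_q=0$ in $\Omega$, $\partial\phi_q/\partial\mathbf{n}=q\theta$ on $\partial\Omega$, has weak form
\begin{equation*}
\int_\Omega\nabla\phi_q\cdot\nabla v+\int_\Omega u_q^2\phi_q\,v=q\,\langle\theta,v\rangle_{\partial\Omega}\qquad\text{for all }v\in H^1(\Omega),
\end{equation*}
whose bilinear form is coercive on $H^1(\Omega)$ as soon as $u_q\not\equiv0$ (if coercivity failed, a minimizing sequence would converge, by Rellich, to a nonzero constant annihilating both terms, which is impossible). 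Hence there is a unique $\phi_q=\Phi(u_q)$. Testing with $v=\phi_q$ and using the trace inequality gives the crucial smallness estimate $\|\Phi(u_q)\|_{H^1}\le C|q|\,\|\theta\|_{H^{1/2}(\partial\Omega)}=:C\varepsilon$, with $C$ uniform for $u_q$ in a fixed ball around $u_0$; subtracting the equations for two arguments yields, in the same way, the Lipschitz bound $\|\Phi(u_1)-\Phi(u_2)\|_{H^1}\le C\varepsilon\,\|u_1-u_2\|_{H^1}$.

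I then combine the two solution operators into the map $T(u)=u_0+K[\Phi(u)^2u]$ and seek a fixed point in a closed ball $\overline B(u_0,r)\subset H^1(\Omega)$, with $r$ small enough that every $u$ in the ball satisfies $u\not\equiv0$. In $\mathbf{R}^3$ the embedding $H^1\hookrightarrow L^6$ gives $\Phi(u)^2u\in L^2$ with $\|\Phi(u)^2u\|_{L^2}\le\|\Phi(u)\|_{L^6}^2\|u\|_{L^6}\le C\varepsilon^2\|u\|_{H^1}$, so that $\|T(u)-u_0\|_{H^1}\le C\varepsilon^2\|u\|_{H^1}$; a similar computation, using the Lipschitz bound on $\Phi$, gives $\|T(u_1)-T(u_2)\|_{H^1}\le C\varepsilon^2\|u_1-u_2\|_{H^1}$. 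Therefore, once $\varepsilon=|q|\,\|\theta\|_{H^{1/2}(\partial\Omega)}$ is small enough, $T$ maps the ball into itself and is a contraction; its unique fixed point $u_q^\ast$, together with $\phi_q^\ast=\Phi(u_q^\ast)$, solves (\ref{Main1})-(\ref{Mix1}). The solution is automatically nontrivial: $u_q^\ast\equiv0$ would force $\sqrt{4\pi}\,q\,h=0$ on $\partial\Omega$, contradicting $h\neq0$ (recall $q\neq0$); equivalently, $u_q^\ast$ lies near $u_0\neq0$.

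I expect the main technical obstacle to be the control of the $\phi$-solve as $u$ varies, specifically the \emph{uniform} coercivity constant of $-\Delta+u^2$ with Neumann conditions for $u$ ranging over the ball around $u_0$, which is what makes the constants $C$ above independent of $u$; this amounts to bounding the bottom of the spectrum away from $0$ uniformly, e.g.\ by splitting $\phi$ into its mean and zero-mean parts and combining the Neumann--Poincar\'e inequality with a lower bound on $\int_\Omega u^2$. An alternative to the contraction argument is to observe that at $\theta=0$ the pair $(u_0,0)$ solves the system and that the linearization there is the block-diagonal isomorphism $\mathrm{diag}(-\Delta+m^2,\ -\Delta+u_0^2)$ (Dirichlet, resp.\ Neumann); the implicit function theorem then continues the solution for small $q\theta$, giving the same conclusion.
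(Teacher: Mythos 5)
Your route --- solving the Neumann problem to get $\phi=\Phi(u)$ and then running a contraction for $T(u)=u_0+K[\Phi(u)^2u]$ around the lift $u_0$ --- is genuinely different from the paper's (variational reduction plus global minimization of a reduced functional), but it contains a genuine gap, located exactly at the point you flag as ``the main technical obstacle'': the constant $C$ in $\Vert\Phi(u)\Vert_{H^1}\le C\vert q\vert\Vert\theta\Vert_{H^{1/2}(\partial\Omega)}$ cannot be made uniform in $q$, and in part of the regime covered by Theorem \ref{thmix} that estimate is false, not merely hard to prove. Indeed the coercivity constant of $\phi\mapsto\int_\Omega\vert\nabla\phi\vert^2dx+\int_\Omega u^2\phi^2dx$ on $H^1(\Omega)$ is at most $\frac{1}{\vert\Omega\vert}\int_\Omega u^2dx$ (test with $\phi\equiv1$), and on your ball $\int_\Omega u^2dx\approx\int_\Omega u_0^2dx\le c\,q^2\Vert h\Vert_{H^{1/2}(\partial\Omega)}^2$, because $u_0$ itself carries a factor $q$ through its boundary datum $\sqrt{4\pi}qh$; so your $C$ behaves like $q^{-2}$. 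Worse, if $\int_{\partial\Omega}\theta\,d\sigma\neq0$, testing the equation for $\Phi(u)$ against the constant function $1$ gives
\begin{equation*}
\int_\Omega u^2\,\Phi(u)\,dx=q\int_{\partial\Omega}\theta\,d\sigma,
\qquad\text{hence}\qquad
\Vert\Phi(u)\Vert_{L^2}\ \ge\ \frac{\vert q\vert\,\bigl\vert\int_{\partial\Omega}\theta\,d\sigma\bigr\vert}{\Vert u\Vert_{L^4}^{2}}\ \ge\ \frac{c}{\vert q\vert}
\end{equation*}
for every $u$ in a small ball around $u_0$. So $\Phi(u)$ is large, not $O(\varepsilon)$; the correction $K[\Phi(u)^2u]$ is not small compared with $u_0$; and $T$ is neither a self-map nor a contraction on any small ball around $u_0$.

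Consequently your argument (and equally the implicit-function-theorem variant, whose second diagonal block $-\Delta+u_0^2$ has inverse of norm $\sim q^{-2}$) proves existence only under a condition of the type $\Vert\theta\Vert_{H^{1/2}(\partial\Omega)}\lesssim\vert q\vert\,\Vert h\Vert_{H^{1/2}(\partial\Omega)}^2$, whose threshold degenerates as $q\to0$. That is strictly weaker than Theorem \ref{thmix}: there the smallness is on the product $\vert q\vert\Vert\theta\Vert_{H^{1/2}(\partial\Omega)}$ with a threshold depending only on $m$ and $\Omega$ (in the paper's proof the only smallness used is $\Vert\Phi_N\Vert_\infty\le c\vert q\vert\Vert\theta\Vert_{H^{1/2}(\partial\Omega)}\le m$), and the Remark following Theorem \ref{thmix1} applies the result precisely in the regime you cannot reach: $\theta$ fixed with $\int_{\partial\Omega}\theta\,d\sigma\neq0$ and $q\to0$. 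In that regime, by the identity displayed above, any solution whose matter field stays near $u_0$ must have electric potential of $L^2$-size at least $c/\vert q\vert$, so it is not a perturbation of $(u_0,0)$, and no scheme presupposing $\phi$ small can produce it. This is exactly why the paper argues non-perturbatively: it lifts the data by $U$ and by the zero-mean Neumann potential $\Phi_N$ (with the compatibility correction $q\kappa$), solves the $\varphi$-equation exactly for every $v$ (possible because $h\neq0$ forces $v+U\neq0$), splits $\varphi_v=\xi_v+\eta_v$ and controls the possibly large piece $\eta_v$ by sign and structural estimates rather than by smallness, and finally minimizes the coercive reduced functional. Only in the special case $\int_{\partial\Omega}\theta\,d\sigma=0$ could your contraction plausibly be made uniform (by scaling the radius of the ball with $\Vert u_0\Vert$); the theorem imposes no such restriction.
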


It is easily seen that the problem (\ref{Main1})-(\ref{Mix1}) has infinitely
many trivial solutions when $h=0$ and $\int_{\partial\Omega}\theta\,d\sigma=0$.

The case $h\neq0$ is covered by Theorem \ref{thmix}.

In \cite[Theorem 1.1]{DPS2007} we have shown that, if $\left\vert q\right\vert
\!\left\Vert \theta\right\Vert _{H^{1/2}\left(  \partial\Omega\right)  }$ is
sufficiently small, $h=0$ and $\int_{\partial\Omega}\theta\,d\sigma\neq0$,
then problem (\ref{Main1})-(\ref{Mix1}) has a nontrivial solution.

Taking into account the Neumann boundary condition on $\phi_{q}$, the second
equation of (\ref{Main1}) gives the following necessary condition%
\[
\int_{\Omega}\phi_{q}u_{q}^{2}\,dx=q\int_{\partial\Omega}\theta\,d\sigma.
\]
Hence, if (\ref{Main1})-(\ref{Mix1}) has trivial solutions (\emph{i.e.}
$u_{q}=0$), then we have necessarily $\int_{\partial\Omega}\theta\,d\sigma=0$
and, of course, $h=0$. Vice versa, in the same paper \cite[Theorem
1.1]{DPS2007} we have shown that, if $\left\vert q\right\vert \!\left\Vert
\theta\right\Vert _{H^{1/2}\left(  \partial\Omega\right)  }$ is sufficiently
small, the joint conditions $h=0$ and $\int_{\partial\Omega}\theta\,d\sigma=0$
force $u_{q}$ to be $0$.

All these results are summarized in the following statement.

\begin{theorem}
\label{thmix1}If $\left\vert q\right\vert \!\left\Vert \theta\right\Vert
_{H^{1/2}\left(  \partial\Omega\right)  }$ is sufficiently small, then problem
(\ref{Main1})-(\ref{Mix1}) has a solution. This solution is trivial if and
only if $h=0$ and $\int_{\partial\Omega}\theta\,d\sigma=0$.
\end{theorem}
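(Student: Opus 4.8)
The plan is to assemble this statement from the partial results already established, treating the existence assertion and the triviality characterization separately. For existence I would distinguish three cases according to the data $h$ and $\int_{\partial\Omega}\theta\,d\sigma$. If $h\neq0$, Theorem \ref{thmix} directly yields a nontrivial solution under the smallness hypothesis. If $h=0$ but $\int_{\partial\Omega}\theta\,d\sigma\neq0$, the existence of a nontrivial solution is exactly \cite[Theorem 1.1]{DPS2007}. Finally, if both $h=0$ and $\int_{\partial\Omega}\theta\,d\sigma=0$, I would exhibit a trivial solution explicitly: setting $u_{q}\equiv0$ reduces (\ref{Main1})-(\ref{Mix1}) to the harmonic Neumann problem $\Delta\phi_{q}=0$ in $\Omega$ with $\partial\phi_{q}/\partial\mathbf{n}=q\theta$ on $\partial\Omega$, whose solvability condition $\int_{\partial\Omega}q\theta\,d\sigma=0$ is met; hence a potential $\phi_{q}$ exists and the pair $(0,\phi_{q})$ solves the system. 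This exhausts all configurations of the data, so a solution always exists.

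For the triviality characterization I would rely on the integral identity obtained by integrating the second equation of (\ref{Main1}) over $\Omega$ and applying the divergence theorem together with the Neumann condition in (\ref{Mix1}),
\[
\int_{\Omega}\phi_{q}u_{q}^{2}\,dx=\int_{\Omega}\Delta\phi_{q}\,dx=\int_{\partial\Omega}\frac{\partial\phi_{q}}{\partial\mathbf{n}}\,d\sigma=q\int_{\partial\Omega}\theta\,d\sigma.
\]
For the necessity direction, suppose the solution is trivial, i.e. $u_{q}\equiv0$. The Dirichlet condition in (\ref{Mix1}) forces $\sqrt{4\pi}\,q\,h=0$ on $\partial\Omega$, and since $q\neq0$ this gives $h=0$; substituting $u_{q}=0$ into the identity above leaves $0=q\int_{\partial\Omega}\theta\,d\sigma$, whence $\int_{\partial\Omega}\theta\,d\sigma=0$. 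Thus a trivial solution can occur only when both conditions hold.

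For the converse, assume $h=0$ and $\int_{\partial\Omega}\theta\,d\sigma=0$ together with the smallness of $|q|\,\|\theta\|_{H^{1/2}(\partial\Omega)}$. Here I would invoke \cite[Theorem 1.1]{DPS2007}, where under precisely these hypotheses every solution is shown to satisfy $u_{q}\equiv0$, i.e. to be trivial. Combining the two directions yields the stated equivalence. The only genuinely delicate input is this converse, which rests on the quantitative smallness estimate of \cite{DPS2007}; the remaining steps are the elementary integral identity and the case bookkeeping, so I anticipate no obstacle beyond correctly matching each data configuration to the appropriate cited result.
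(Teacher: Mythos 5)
Your proposal is correct and follows essentially the same route as the paper: Theorem \ref{thmix} handles $h\neq0$, \cite[Theorem 1.1]{DPS2007} handles both the case $h=0$, $\int_{\partial\Omega}\theta\,d\sigma\neq0$ and the converse (forcing $u_{q}=0$ when $h=0$ and $\int_{\partial\Omega}\theta\,d\sigma=0$), the trivial solutions of the harmonic Neumann problem cover the remaining case, and the divergence-theorem identity $\int_{\Omega}\phi_{q}u_{q}^{2}\,dx=q\int_{\partial\Omega}\theta\,d\sigma$ gives necessity. This is precisely the assembly of partial results that the paper carries out in the discussion preceding the theorem.
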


\begin{remark}
Under the boundary conditions (\ref{Mix0}), the existence of solutions
(trivial or nontrivial) of (\ref{Main0}) is not continuous with respect to
$q\rightarrow0$ in the following sense. If we fix boundary data such that
$\int_{\partial\Omega}\theta\,d\sigma\neq0$, Theorem \ref{thmix1} gives (via
(\ref{ch var})) a nontrivial solution of (\ref{Main0})-(\ref{Mix0}) for all
$q\neq0$ sufficiently small. However the limit problem\ has no solutions.
Indeed, for $q=0$, (\ref{Main0}) decouples into (\ref{split u}) and
\[
\left\{
\begin{array}
[c]{ll}%
\Delta\phi=0, & \\
\frac{\partial\phi}{\partial\mathbf{n}}=\theta\quad & \text{on }\partial
\Omega,
\end{array}
\right.
\]
and the latter system has a solution if and only if $\int_{\partial\Omega
}\theta\,d\sigma=0$. Moreover, unlike the case $q\neq0$, the limit problem
depends on $\omega$.
\end{remark}

From now on, for the sake of simplicity, we shall omit the subscript $q$.

As we said before, we can consider a nonlinear lower order term $g(x,u)$ in
the first equation of (\ref{Main1}). Thus we obtain the system%
\begin{equation}
\left\{
\begin{array}
[c]{l}%
-\Delta u-\phi^{2}u+m^{2}u-g\left(  x,u\right)  =0,\\
\Delta\phi=\phi u^{2}.
\end{array}
\right.  \tag{S2}\label{prob1nl}%
\end{equation}

We assume that $g$ behaves like $\left\vert u\right\vert ^{p-2}u$ with
$p\in(2,6)$. More precisely, $g\in C\left(  \bar{\Omega}\times\mathbf{R}%
,\mathbf{R}\right)  $ satisfies the following well-known Ambrosetti-Rabinowitz
conditions (see \emph{e.g.} \cite{rab}):

\begin{description}
\item[($\mathbf{g}_{1}$)] there exist $a_{1},a_{2}\geq0$ and $\,p\in\left(
2,6\right)  $ such that
\[
\left\vert g\left(  x,t\right)  \right\vert \leq a_{1}+a_{2}\left\vert
t\right\vert ^{p-1};
\]

\item[($\mathbf{g}_{2}$)] $g\left(  x,t\right)  =o\left(  \left\vert
t\right\vert \right)  $ as $t\rightarrow0$ uniformly in $x$;

\item[($\mathbf{g}_{3}$)] there exist $s\in\left(  2,p\right]  $ and $r\geq0$
such that
\[
0<sG\left(  x,t\right)  \leq tg\left(  x,t\right)  ,
\]
for every $\left\vert t\right\vert \geq r$, where
\[
G\left(  x,t\right)  =\int_{0}^{t}g\left(  x,\tau\right)  \,d\tau.
\]

\end{description}

\begin{theorem}
\label{thnonlin}Let us consider the system (\ref{prob1nl}) with boundary
conditions%
\begin{equation}
\left\{
\begin{array}
[c]{l}%
u=0\\
\phi=q\zeta-\omega
\end{array}
\quad\text{on }\partial\Omega.\right.  \tag{D2}\label{Dir2}%
\end{equation}

\begin{enumerate}
\item If (\ref{hyp}) holds, then there exists a nontrivial solution.

\item If $g$ is odd, for every $m,\omega,q$, there exist infinitely many
solutions $\left(  u_{i},\phi_{i}\right)  \in$ $H_{0}^{1}\left(
\Omega\right)  \times H^{1}\left(  \Omega\right)  $ with $\left\Vert \nabla
u_{i}\right\Vert _{2}\rightarrow+\infty$ and $\left\{  \phi_{i}\right\}  $
bounded in $L^{\infty}\left(  \Omega\right)  $.
\end{enumerate}
\end{theorem}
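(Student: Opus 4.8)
The plan is to eliminate $\phi$ by the usual reduction and then attack a functional of $u$ alone. Write $\bar\zeta:=q\zeta-\omega$. For fixed $u\in H^{1}_{0}(\Omega)$ the second equation with the Dirichlet datum $\phi=\bar\zeta$ on $\partial\Omega$ is linear and coercive: the bilinear form $\int_{\Omega}(\nabla\phi\cdot\nabla v+u^{2}\phi v)\,dx$ is coercive on $H^{1}_{0}(\Omega)$, so Lax–Milgram gives a unique $\phi_{u}\in H^{1}(\Omega)$, and elliptic regularity together with the weak maximum principle yields $\phi_{u}\in C(\bar\Omega)$ with the uniform bound $\|\phi_{u}\|_{\infty}\le\|\bar\zeta\|_{\infty}$. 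Two structural facts will be decisive: first, $\phi_{u}$ depends on $u$ only through $u^{2}$, so $\phi_{-u}=\phi_{u}$; second, the map $u\mapsto\phi_{u}$ is of class $C^{1}$ and sends weakly convergent sequences to strongly convergent ones (it gains regularity). I then set, on $H^{1}_{0}(\Omega)$,
\[
J(u)=\tfrac12\int_{\Omega}|\nabla u|^{2}\,dx+\tfrac{m^{2}}{2}\int_{\Omega}u^{2}\,dx-\int_{\Omega}G(x,u)\,dx-\tfrac12\int_{\Omega}|\nabla\phi_{u}|^{2}\,dx-\tfrac12\int_{\Omega}\phi_{u}^{2}u^{2}\,dx,
\]
i.e. $J(u)=F(u,\phi_{u})$ where $F$ is the action of the system; since $F(u,\cdot)$ is concave and $\phi_{u}$ is its unique critical point, the reduction lemma (as in \cite{bf2}) tells us that $u$ is a critical point of $J$ if and only if $(u,\phi_{u})$ solves \eqref{prob1nl}–\eqref{Dir2}. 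Thus both parts reduce to finding critical points of $J$.

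For part 1, I would verify the mountain pass geometry of $\tilde J:=J-J(0)$ under \eqref{hyp}. Because $g(x,0)=0$ by $(\mathbf{g}_{2})$, the point $u=0$ is a critical point; its second variation is governed by the quadratic form $\int_{\Omega}\bigl(|\nabla u|^{2}+(m^{2}-\phi_{0}^{2})u^{2}\bigr)\,dx$, and using $\|\phi_{0}\|_{\infty}\le\|\bar\zeta\|_{\infty}$, Poincaré's inequality $\int|\nabla u|^{2}\ge\lambda_{1}\int u^{2}$, and \eqref{hyp} in the form $\|\bar\zeta\|_{\infty}^{2}<m^{2}+\lambda_{1}$, this form is positive definite, so $0$ is a strict local minimum; the subquadratic behaviour from $(\mathbf{g}_{2})$ and $(\mathbf{g}_{1})$ makes $0$ a strict local minimum of $\tilde J$ as well. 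Along any ray $tu$ the Ambrosetti–Rabinowitz condition $(\mathbf{g}_{3})$ forces $G(x,tu)\gtrsim t^{s}$ with $s>2$, which dominates the quadratic growth (the $\phi_{u}$-term being controlled by $\tfrac12\|\bar\zeta\|_{\infty}^{2}t^{2}\|u\|_{2}^{2}$), so $\tilde J(tu)\to-\infty$. The Palais–Smale condition follows from subcriticality $p<6$, $(\mathbf{g}_{3})$ (boundedness of PS sequences), and the strong continuity of $u\mapsto\phi_{u}$; the mountain pass theorem \cite{rab} then produces a critical value $>0=\tilde J(0)$, hence a nontrivial solution.

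For part 2, with $g$ odd $G$ is even, and since $\phi_{u}=\phi_{-u}$ the remaining terms in $J$ are even as well, so $\tilde J$ is an even $C^{1}$ functional vanishing at $0$; I would apply the symmetric mountain pass theorem \cite{rab}. The key point — and the reason no hypothesis on $q,\omega$ is needed here — is that condition $(I_{1})$ (positivity on a sphere in a finite-codimension subspace) can be met on the span $X$ of the eigenfunctions $e_{k}$ with $\lambda_{k}>\|\bar\zeta\|_{\infty}^{2}-m^{2}$: on $X$ the quadratic part satisfies $\int_{\Omega}\bigl(|\nabla u|^{2}+(m^{2}-\phi_{u}^{2})u^{2}\bigr)\,dx\ge c\|\nabla u\|_{2}^{2}$ with $c>0$, and combined with $(\mathbf{g}_{2})$, $(\mathbf{g}_{1})$ this gives $\tilde J\ge\alpha>0$ on a small sphere of $X$; the complementary condition $(I_{2})$ (coercivity to $-\infty$ on finite-dimensional subspaces) is again $(\mathbf{g}_{3})$. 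The theorem then yields an unbounded sequence of critical values; since the $\phi_{u_{i}}$-terms are uniformly bounded and $p<6$, unboundedness of the energies forces $\|\nabla u_{i}\|_{2}\to+\infty$, while the maximum principle bound $\|\phi_{i}\|_{\infty}\le\|q\zeta-\omega\|_{\infty}$ gives the claimed $L^{\infty}$-boundedness of $\{\phi_{i}\}$.

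I expect the main obstacle to be the compactness (Palais–Smale) analysis of the reduced functional: the reduction replaces the coupling by the nonlocal term $\int_{\Omega}\phi_{u}^{2}u^{2}\,dx$, and one must show both that $u\mapsto\phi_{u}$ and the associated energy terms pass to the limit under weak convergence and that PS sequences are bounded despite the indefinite quadratic part. The boundedness is supplied by $(\mathbf{g}_{3})$ via the standard $J(u_n)-\tfrac1s\langle J'(u_n),u_n\rangle$ estimate, and the limit passage by the regularity gain in the solution map together with the subcritical growth $p<6$; carefully controlling the $\phi_{u}$-dependent cross terms in these estimates is the technical heart of the argument.
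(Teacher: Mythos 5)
Your strategy is essentially the paper's: reduce to a functional of $u$ alone via the solution map of the second equation (your Lax--Milgram step is the paper's change of variables $\phi=\varphi+\Phi_{D}$, with $\Phi_{D}$ the harmonic extension of $q\zeta-\omega$), establish the uniform bound $\left\Vert \phi_{u}\right\Vert _{\infty}\leq\left\Vert q\zeta-\omega\right\Vert _{\infty}$ by the maximum principle (the paper's Lemma \ref{stimaphi}), run the Mountain Pass Theorem for part 1 under (\ref{hyp}) exactly as in (\ref{stimaJ1})--(\ref{stimaJ2}), and for part 2 apply the $\mathbf{Z}_{2}$-Mountain Pass Theorem with $X$ the closed span of the eigenfunctions whose eigenvalues exceed $\left\Vert \Phi_{D}\right\Vert _{\infty}^{2}-m^{2}$, which is precisely the paper's case (b). One cosmetic issue: your ``second variation'' justification that $0$ is a strict local minimum presupposes $J\in C^{2}$, which is unavailable since $g$ is only continuous; it should be replaced, as in the paper, by the direct inequality coming from $\left(\mathbf{G}_{1}\right)$ together with $F(u,\phi_{u})\geq F(u,\Phi_{D})$ (valid because $\phi_{u}$ maximizes the concave map $\phi\mapsto F(u,\phi)$).

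The genuine gap is in the Palais--Smale boundedness, exactly the point you call the technical heart. You assert that boundedness of PS sequences follows from $\left(\mathbf{g}_{3}\right)$ ``via the standard $J(u_{n})-\frac{1}{s}\langle J^{\prime}(u_{n}),u_{n}\rangle$ estimate''. It does not close here: because of the nonlocal term, that computation gives, after using $\left\Vert \phi_{u_{n}}\right\Vert _{\infty}\leq\left\Vert \Phi_{D}\right\Vert _{\infty}$,
\[
\left(  \frac{1}{2}-\frac{1}{s}\right)  \left\Vert \nabla u_{n}\right\Vert
_{2}^{2}\leq c+o(1)\left\Vert \nabla u_{n}\right\Vert _{2}+C\left(
1+\left\Vert \Phi_{D}\right\Vert _{\infty}^{2}\right)  \left\Vert
u_{n}\right\Vert _{2}^{2},
\]
which is only the paper's a priori relation (\ref{priori}): the gradient norm is controlled by the $L^{2}$ norm, nothing more. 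Since part 2 is claimed for \emph{every} $m,\omega,q$, no smallness of $\left\Vert \Phi_{D}\right\Vert _{\infty}$ is available, so the quadratic form on the right can strictly dominate the one on the left and boundedness does not follow from this estimate alone (the same obstruction is present even under (\ref{hyp})). The paper closes this with an idea missing from your proposal: if $\left\Vert \nabla u_{n}\right\Vert _{2}\rightarrow\infty$, then (\ref{priori}) forces $\left\Vert u_{n}\right\Vert _{2}^{2}\geq c\left\Vert \nabla u_{n}\right\Vert _{2}^{2}\rightarrow\infty$, and then the superquadratic lower bound $\left(\mathbf{G}_{2}\right)$, $G(x,t)\geq b_{1}\left\vert t\right\vert ^{s}-b_{2}$ with $s>2$, yields $J(u_{n})\leq c_{10}\left\Vert u_{n}\right\Vert _{2}^{2}-b_{1}\left\Vert u_{n}\right\Vert _{s}^{s}+b_{2}\left\vert \Omega\right\vert \rightarrow-\infty$ (on a bounded domain the $L^{s}$ norm dominates the $L^{2}$ norm), contradicting $\left\vert J(u_{n})\right\vert \leq c_{1}$. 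Without this contradiction argument (or an equivalent substitute) the PS condition is unproved, and both the mountain pass application in part 1 and the symmetric mountain pass application in part 2 are left without their compactness hypothesis.
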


The system (\ref{prob1nl}) with mixed boundary conditions has been studied in
\cite[Theorem 1.3]{DPS2007}. For the sake of completeness, we quote the statement.

\begin{theorem}
Let us consider the system (\ref{prob1nl}) with boundary conditions%
\[
\left\{
\begin{array}
[c]{l}%
u=0\\
\dfrac{\partial\phi}{\partial\mathbf{n}}=q\,\theta
\end{array}
\right.  \quad\text{on }\partial\Omega.
\]
Assume that $\int_{\partial\Omega}\theta\,d\sigma=0$.

\begin{enumerate}
\item If $q\!\left\Vert \theta\right\Vert _{H^{1/2}\left(  \partial
\Omega\right)  }$ is sufficiently small, then there exists a nontrivial solution.

\item If $g$ is odd, for every $m,\omega,q$, there exist infinitely many
solutions $\left(  u_{i},\phi_{i}\right)  \in$ $H_{0}^{1}\left(
\Omega\right)  \times H^{1}\left(  \Omega\right)  $, with $\left\Vert \nabla
u_{i}\right\Vert _{2}\rightarrow+\infty$ and $\left\{  \phi_{i}\right\}  $
bounded in $L^{\infty}\left(  \Omega\right)  $.
\end{enumerate}
\end{theorem}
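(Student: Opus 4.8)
The plan is to eliminate the field by solving the second equation for fixed $u$ and then to treat the resulting single equation in $u$ variationally. For $u\in H_{0}^{1}(\Omega)$ the second equation of (\ref{prob1nl}) with the Neumann datum reads, in weak form,
\[
\int_{\Omega}\nabla\phi\cdot\nabla w\,dx+\int_{\Omega}u^{2}\phi w\,dx=q\int_{\partial\Omega}\theta w\,d\sigma\qquad\text{for all }w\in H^{1}(\Omega).
\]
For $u\neq0$ the left-hand bilinear form is coercive on $H^{1}(\Omega)$, so there is a unique solution $\phi=\Phi[u]$; the assumption $\int_{\partial\Omega}\theta\,d\sigma=0$ makes the problem solvable also in the limit $u=0$, where $\Phi[0]=\eta$ is the zero-average harmonic lifting of the Neumann datum. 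I would record that $\Phi$ is of class $C^{1}$, that it is \emph{even} (the equation depends on $u$ only through $u^{2}$, hence $\Phi[-u]=\Phi[u]$), and the crucial uniform bound $\left\Vert \Phi[u]\right\Vert _{\infty}\leq C\left\vert q\right\vert \left\Vert \theta\right\Vert _{H^{1/2}(\partial\Omega)}$ with $C$ independent of $u$. Introducing the action
\[
\mathcal{F}(u,\phi)=\frac{1}{2}\int_{\Omega}\left(|\nabla u|^{2}+m^{2}u^{2}-\phi^{2}u^{2}-|\nabla\phi|^{2}\right)dx-\int_{\Omega}G(x,u)\,dx+q\int_{\partial\Omega}\theta\phi\,d\sigma,
\]
whose Euler--Lagrange equations are exactly (\ref{prob1nl}) with the mixed boundary conditions, one notes that $\phi\mapsto\mathcal{F}(u,\phi)$ is concave and attains its maximum at $\Phi[u]$. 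This yields a one-to-one correspondence: $(u,\Phi[u])$ solves the problem if and only if $u$ is a critical point of $J(u)=\mathcal{F}(u,\Phi[u])$, and an envelope argument gives
\[
J^{\prime}(u)v=\int_{\Omega}\nabla u\cdot\nabla v\,dx+m^{2}\int_{\Omega}uv\,dx-\int_{\Omega}\Phi[u]^{2}uv\,dx-\int_{\Omega}g(x,u)v\,dx,
\]
so that, by $(\mathbf{g}_{2})$, the quadratic form of $J$ at the origin is $\int_{\Omega}|\nabla v|^{2}\,dx+\int_{\Omega}(m^{2}-\Phi[0]^{2})v^{2}\,dx$.

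For part (1) I would apply the Mountain Pass Theorem (\cite{rab}) to $J$. The uniform estimate gives $\left\Vert \Phi[0]\right\Vert _{\infty}^{2}<m^{2}+\lambda_{1}$ as soon as $\left\vert q\right\vert \left\Vert \theta\right\Vert _{H^{1/2}}$ is small, so the quadratic form above is positive definite; together with $(\mathbf{g}_{1})$--$(\mathbf{g}_{2})$ this produces $J\geq\alpha>0$ on a small sphere $\left\Vert u\right\Vert =\rho$. Condition $(\mathbf{g}_{3})$ makes $J$ unbounded below along rays (the $\Phi$-terms stay bounded while $\int G$ grows superlinearly), giving the second mountain-pass geometry. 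The Palais--Smale condition follows again from $(\mathbf{g}_{3})$, through the standard estimate of $J(u)-\frac{1}{s}J^{\prime}(u)u$ in which the nonnegative $\Phi$-terms do no harm, and from the compact embeddings $H_{0}^{1}(\Omega)\hookrightarrow L^{p}(\Omega)$, $p<6$, together with the continuity of $\Phi$. The critical level is positive, hence the critical point is nontrivial.

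For part (2), when $g$ is odd the primitive $G$ is even and, $\Phi$ being even, the whole functional $J$ is even with $J(0)=0$. I would then invoke the $\mathbf{Z}_{2}$-symmetric Mountain Pass Theorem (\cite{rab}). No smallness of $\left\vert q\right\vert \left\Vert \theta\right\Vert _{H^{1/2}}$ is required: since $\Phi[0]\in L^{\infty}$, the operator $-\Delta+(m^{2}-\Phi[0]^{2})$ has only finitely many nonpositive eigenvalues, so one splits $H_{0}^{1}(\Omega)=V\oplus X$ with $V$ finite dimensional and the quadratic form positive on $X$, which supplies the linking geometry; on every finite dimensional subspace $J\to-\infty$ because $\int G$ dominates the quadratic terms and the $\Phi$-terms, which are uniformly bounded. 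The symmetric theorem then yields critical points with $J(u_{i})\to+\infty$, whence $\left\Vert \nabla u_{i}\right\Vert _{2}\to+\infty$, while the uniform estimate bounds $\left\{ \phi_{i}\right\} =\left\{ \Phi[u_{i}]\right\} $ in $L^{\infty}(\Omega)$.

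The main obstacle is precisely the uniform $L^{\infty}$ bound on $\Phi[u]$, which underlies both the smallness mechanism of part (1) and the claimed $L^{\infty}$ boundedness in part (2). I would prove it by writing $\Phi[u]=\eta+\chi$, where $\eta$ is the harmonic lifting of the Neumann datum (so $\left\Vert \eta\right\Vert _{\infty}\leq C\left\vert q\right\vert \left\Vert \theta\right\Vert _{H^{1/2}}$ by elliptic regularity and trace estimates) and $\chi$ solves $\Delta\chi=(\eta+\chi)u^{2}$ with homogeneous Neumann condition, and then running a truncation argument of De Giorgi--Stampacchia type on $\chi$: testing with $(\chi-k)^{+}$ and exploiting the favourable sign of the zero-order term $u^{2}\chi$ controls the super-level sets independently of $u$. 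This estimate, rather than any compactness issue, is the delicate point, and it is what forces the constant in the smallness assumption and guarantees the $L^{\infty}$ bound on the sequence $\left\{ \phi_{i}\right\} $.
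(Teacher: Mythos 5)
A preliminary caveat: this paper does not prove the statement at all --- it is quoted, for completeness, from \cite[Theorem 1.3]{DPS2007}. The only benchmarks available here are the proofs of Theorems \ref{thmix} and \ref{thnonlin}, and your proposal follows exactly that pattern: elimination of $\phi$ through the Neumann problem (whose unique solvability for $u\neq0$ is the lemma quoted from \cite{DPS2007}), a $u$-uniform $L^{\infty}$ bound on the eliminated field (your Stampacchia truncation of $\chi$ is the same mechanism as the estimates $-\max\Phi_{N}\leq\xi_{v}\leq-\min\Phi_{N}$ in Section 3; a single truncation at level $k=\Vert\eta\Vert_{\infty}$ suffices, no De Giorgi iteration is needed), then the Mountain Pass Theorem for part (1) and Theorem \ref{Z2MP} with the splitting $V\oplus X$ for part (2), as in Section 4. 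The skeleton is the right one.

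There is, however, one genuine gap, and it is specific to this theorem, where $h=0$: the point $u=0$ belongs to the domain of the reduction, and your claim that $\Phi$ is of class $C^{1}$ with $\Phi[0]=\eta$ is false --- $\Phi$ cannot even be extended continuously to $u=0$. Indeed, for $u\neq0$, testing the second equation with $w=1$ forces the constraint $\int_{\Omega}u^{2}\Phi[u]\,dx=0$; writing $\Phi[u]=\eta+\chi_{u}$ one gets $\Vert\nabla\chi_{u}\Vert_{2}^{2}\leq\frac{1}{2}\Vert\eta\Vert_{\infty}^{2}\Vert u\Vert_{2}^{2}\to0$, but the \emph{mean value} of $\chi_{u}$ is dictated by the constraint, and along a suitable sequence of bumps concentrating at a point $x_{0}$ one finds $\Phi[u_{n}]\to\eta-\eta(x_{0})\neq\eta$. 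For the same reason your identification of ``the quadratic form of $J$ at the origin'' is incorrect: the second-order behaviour of $J$ at $0$ is direction dependent,
\[
J(tv)=J(0)+\frac{t^{2}}{2}\left(\Vert\nabla v\Vert_{2}^{2}+m^{2}\Vert v\Vert_{2}^{2}-\int_{\Omega}\Bigl(\eta-\frac{\int_{\Omega}\eta v^{2}\,dx}{\int_{\Omega}v^{2}\,dx}\Bigr)^{2}v^{2}\,dx\right)+o(t^{2}),
\]
so $J$ has no Hessian at $0$ (your formula is only a lower bound --- fortunately the safe direction). This is not pedantry: everything both mountain-pass theorems require ($J\in C^{1}$, the normalization $I(0)=0$, the sphere condition, the use of $(\mathbf{g}_{2})$) is centered precisely at $u=0$. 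The gap is reparable, because $\int_{\partial\Omega}\theta\,d\sigma=0$ makes the action insensitive to the additive constants that cause the discontinuity; concretely, one has the identity
\[
J(u)-J(0)=\frac{1}{2}\Vert\nabla u\Vert_{2}^{2}+\frac{m^{2}}{2}\Vert u\Vert_{2}^{2}-\int_{\Omega}G(x,u)\,dx-\frac{1}{2}\int_{\Omega}\Phi[u]^{2}u^{2}\,dx-\frac{1}{2}\Vert\nabla\chi_{u}\Vert_{2}^{2},
\]
in which the last two terms are $O(\Vert u\Vert_{2}^{2})$ uniformly in $u$; this gives both the $C^{1}$ regularity of $J$ at $0$ (with $J^{\prime}(0)=0$) and the geometric lower bounds, provided these are run with the uniform constant $\sup_{u}\Vert\Phi[u]\Vert_{\infty}\leq2\Vert\eta\Vert_{\infty}$ rather than with $\Vert\Phi[0]\Vert_{\infty}$ (the same correction is needed in the eigenvalue splitting of your part (2)). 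Some such argument must be supplied: as written, the delicate point of the case $h=0$, $\int_{\partial\Omega}\theta\,d\sigma=0$ is exactly the smoothness of the reduced functional across $u=0$, not the $L^{\infty}$ estimate that you single out.
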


We conclude this section by recalling some results which have motivated our
research. The application of global variational methods to the study of KGM
systems started with the pioneering paper of Benci and Fortunato \cite{bf2}.
They studied the nonlinear KGM system
\begin{equation}
\left\{
\begin{array}
[c]{l}%
-\Delta u-\left(  q\phi-\omega\right)  ^{2}u+m^{2}u=\left\vert u\right\vert
^{p-2}u,\\
\Delta\phi=4\pi q\left(  q\phi-\omega\right)  u^{2}%
\end{array}
\right.  \label{Main3}%
\end{equation}
in the whole space $\mathbf{R}^{3}$. They proved the existence of infinitely
many solutions if $p\in(4,6)$ and $\omega^{2}<m^{2}$. Then D'Aprile and Mugnai
(\cite{dm2}) proved two interesting nonexistence results: the
\textquotedblleft linear\textquotedblright\ system (\ref{Main0}) has no
nontrivial finite energy solution in $\mathbf{R}^{3}$; the \textquotedblleft
nonlinear\textquotedblright\ system (\ref{Main3}) has no nontrivial finite
energy solutions in $\mathbf{R}^{3}$ if $p\notin\lbrack2,6]$. The existence
result in \cite{bf2} has been generalized in \cite{dm1} and then in
\cite{bf4}. In \cite{bf4}, Benci and Fortunato prove the existence of
solutions of (\ref{Main3}) when $p\in(2,6)$ and
\[
\left\vert \omega\right\vert <m\,\sqrt{\min\left(  1,\frac{p-2}{2}\right)  }.
\]
Further results on this topic are contained in \cite{Cass}, \cite{D'Av-Pis}.

On the other hand, the lower-order term $\left\vert u\right\vert ^{p-2}u$ in
(\ref{Main3}) is not suitable for physical models since
\[
W(u)=\frac{1}{2}m^{2}u^{2}-\frac{1}{p}\left\vert u\right\vert ^{p}%
\]
is not positive and the conservation of the energy does not guarantee global
existence for the initial value problems. Some recent papers (\cite{bf5},
\cite{Long}) are concerned with systems%
\begin{equation}
\left\{
\begin{array}
[c]{l}%
-\Delta u-\left(  q\phi-\omega\right)  ^{2}u+m^{2}u=G^{\prime}(u),\\
\Delta\phi=4\pi q\left(  q\phi-\omega\right)  u^{2},
\end{array}
\right.  \label{Main4}%
\end{equation}
where%
\[
W(u)=\frac{1}{2}m^{2}u^{2}-G(u)\geq0.
\]
In \cite{bf5} it is shown that there exist solutions of (\ref{Main4}) if the
coupling constant $q$ is sufficiently small. It is easy to see that our
Theorems \ref{th1} and \ref{thmix} (concerning the system (\ref{Main0})) are
consistent with this kind of results.

Lastly we recall that Benci and Fortunato have introduced a different class of
solutions of the KGM system: three-dimensional vortices, \emph{i.e.} solutions
such that the matter field $\psi$ has nontrivial angular momentum and the
corresponding electromagnetic field has nontrivial magnetic component (see
\cite{fort} and references therein).

\section{\label{DDsect}Proof of Theorem \ref{th1}}

To get homogeneous boundary conditions, we change variables as follows%
\[
v=u-U,\qquad\varphi=\phi-\Phi_{D},
\]
where $U$ and $\Phi_{D}$ are the solutions of%
\begin{align}
&  \left\{
\begin{array}
[c]{ll}%
-\Delta U+m^{2}U=0,\quad & \\
U=\sqrt{4\pi}qh & \text{on }\partial\Omega,
\end{array}
\right. \label{def U}\\
&  \left\{
\begin{array}
[c]{ll}%
\Delta\Phi_{D}=0, & \\
\Phi_{D}=q\zeta-\omega\quad & \text{on }\partial\Omega.
\end{array}
\right. \nonumber
\end{align}
By (\ref{hyp}) we have%
\begin{equation}
\left\Vert \Phi_{D}\right\Vert _{\infty}^{2}=\left\Vert q\zeta-\omega
\right\Vert _{\infty}^{2}<m^{2}+\lambda_{1}. \label{hyp2}%
\end{equation}

Then problem (\ref{Main1})-(\ref{Dir1}) can be written as%
\begin{equation}
\left\{
\begin{array}
[c]{ll}%
-\Delta v-\left(  \varphi+\Phi_{D}\right)  ^{2}\left(  v+U\right)
+m^{2}v=0,\quad & \\
\Delta\varphi=\left(  \varphi+\Phi_{D}\right)  \left(  v+U\right)  ^{2}, & \\
v=\varphi=0 & \text{on }\partial\Omega.
\end{array}
\right.  \label{prob2}%
\end{equation}
The solutions of (\ref{prob2}) are the critical points of the functional%
\[
F\left(  v,\varphi\right)  =\frac{1}{2}\left\Vert \nabla v\right\Vert _{2}%
^{2}-\frac{1}{2}\int_{\Omega}\left(  \varphi+\Phi_{D}\right)  ^{2}\left(
v+U\right)  ^{2}dx+\frac{m^{2}}{2}\left\Vert v\right\Vert _{2}^{2}-\frac{1}%
{2}\left\Vert \nabla\varphi\right\Vert _{2}^{2},
\]
defined in $H_{0}^{1}\left(  \Omega\right)  \times H_{0}^{1}\left(
\Omega\right)  $. The functional $F$ is strongly indefinite, so we apply a
well known reduction argument (see \emph{e.g.} \cite{bf2}).

Let $\varphi_{v}\in H_{0}^{1}\left(  \Omega\right)  $ denote the unique
solution of%
\[
\left\{
\begin{array}
[c]{ll}%
\Delta\varphi=\left(  \varphi+\Phi_{D}\right)  \left(  v+U\right)  ^{2},\quad
& \\
\varphi=0 & \text{on }\partial\Omega.
\end{array}
\right.
\]
Since the map $v\mapsto\varphi_{v}$ is $C^{1}$, we define the \emph{reduced}
$C^{1}$ functional $J\left(  v\right)  =F\left(  v,\varphi_{v}\right)  $. It
is easy to see that the pair $(v,\varphi)$ is a solution of (\ref{prob2}) if
and only if $v$ is a critical point of $J$ and $\varphi=\varphi_{v}$. So we
have to prove the existence of a critical point of $J$.

The function $\varphi_{v}$ satisfies
\begin{equation}
\left\Vert \nabla\varphi_{v}\right\Vert _{2}^{2}+\int_{\Omega}\varphi_{v}%
^{2}\left(  v+U\right)  ^{2}dx=-\int_{\Omega}\varphi_{v}\Phi_{D}\left(
v+U\right)  ^{2}dx. \label{varphiquad}%
\end{equation}
Then we obtain%

\[
J\left(  v\right)  =\frac{1}{2}\left\Vert \nabla v\right\Vert _{2}^{2}%
+\frac{m^{2}}{2}\left\Vert v\right\Vert _{2}^{2}-\frac{1}{2}\int_{\Omega}%
\Phi_{D}\left(  \varphi_{v}+\Phi_{D}\right)  \left(  v+U\right)  ^{2}dx
\]
and, for every $w\in H_{0}^{1}\left(  \Omega\right)  $,%
\[
\left\langle J^{\prime}\left(  v\right)  ,w\right\rangle =\int_{\Omega}\nabla
v\nabla w\,dx+m^{2}\int_{\Omega}vw\,dx-\int_{\Omega}\left(  \varphi_{v}%
+\Phi_{D}\right)  ^{2}\left(  v+U\right)  w\,dx.
\]
The following lemma shows that the functions $\left\{  \varphi_{v}\right\}  $
are uniformly bounded in $L^{\infty}\left(  \Omega\right)  $.

\begin{lemma}
\label{stimaphi}For every $v\in H_{0}^{1}\left(  \Omega\right)  $, the
function $\varphi_{v}$ satisfies the following inequalities%
\begin{equation}
-\max\left\{  0,\Phi_{D}\right\}  =-\Phi_{D}^{+}\leq\varphi_{v}\leq\Phi
_{D}^{-}=\max\left\{  0,-\Phi_{D}\right\}  ,\qquad\text{a.e. in }\Omega.
\label{stimamaxmin}%
\end{equation}
Hence $\varphi_{v}\in L^{\infty}(\Omega)$ and satisfies%
\begin{align}
\left\Vert \varphi_{v}\right\Vert _{\infty}  &  \leq\left\Vert \Phi
_{D}\right\Vert _{\infty},\nonumber\\
\left\Vert \varphi_{v}+\Phi_{D}\right\Vert _{\infty}  &  \leq\left\Vert
\Phi_{D}\right\Vert _{\infty}. \label{stimaphi2}%
\end{align}

\end{lemma}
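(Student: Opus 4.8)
The plan is to establish the pointwise bounds in \eqref{stimamaxmin} by a maximum-principle / truncation argument applied to the defining equation
\[
\Delta\varphi_{v}=\left(\varphi_{v}+\Phi_{D}\right)\left(v+U\right)^{2},\qquad\varphi_{v}=0\text{ on }\partial\Omega.
\]
First I would prove the upper bound $\varphi_{v}\leq\Phi_{D}^{-}$. The natural test function is $w=\left(\varphi_{v}-\Phi_{D}^{-}\right)^{+}\in H_{0}^{1}(\Omega)$; note that $w=0$ on $\partial\Omega$ because $\varphi_{v}=0$ and $\Phi_{D}^{-}\geq0$ there. I would multiply the equation by $w$ and integrate by parts, obtaining
\[
-\int_{\Omega}\nabla\varphi_{v}\cdot\nabla w\,dx=\int_{\Omega}\left(\varphi_{v}+\Phi_{D}\right)\left(v+U\right)^{2}w\,dx.
\]
On the set $\{w>0\}=\{\varphi_{v}>\Phi_{D}^{-}\}$ one has $\nabla w=\nabla\varphi_{v}$, so the left-hand side is $-\left\Vert \nabla w\right\Vert_{2}^{2}$. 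The key observation is the sign of the integrand on the right: where $\varphi_{v}>\Phi_{D}^{-}=\max\{0,-\Phi_{D}\}$ we have $\varphi_{v}>-\Phi_{D}$, hence $\varphi_{v}+\Phi_{D}>0$, and since $\left(v+U\right)^{2}\geq0$ and $w\geq0$ the right-hand integral is nonnegative. Thus $-\left\Vert \nabla w\right\Vert_{2}^{2}\geq0$, forcing $\nabla w=0$ and, by the Poincar\'e inequality (recall $w\in H_{0}^{1}(\Omega)$), $w=0$ a.e. This gives $\varphi_{v}\leq\Phi_{D}^{-}$.

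The lower bound $\varphi_{v}\geq-\Phi_{D}^{+}$ follows by the symmetric argument with the test function $w=\left(\varphi_{v}+\Phi_{D}^{+}\right)^{-}\geq0$, again in $H_{0}^{1}(\Omega)$ since $\Phi_{D}^{+}\geq0$ makes it vanish on the boundary. On the set where this truncation is active one has $\varphi_{v}<-\Phi_{D}^{+}\leq-\Phi_{D}$, so $\varphi_{v}+\Phi_{D}<0$, and the same sign analysis yields that the active set has measure zero. Together the two estimates give \eqref{stimamaxmin}.

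Finally, the $L^{\infty}$ estimates \eqref{stimaphi2} are a routine consequence of the pointwise bounds. From $-\Phi_{D}^{+}\leq\varphi_{v}\leq\Phi_{D}^{-}$ and the elementary inequalities $\Phi_{D}^{\pm}\leq\left\vert\Phi_{D}\right\vert\leq\left\Vert\Phi_{D}\right\Vert_{\infty}$ we immediately get $\left\Vert\varphi_{v}\right\Vert_{\infty}\leq\left\Vert\Phi_{D}\right\Vert_{\infty}$. For the second inequality, adding $\Phi_{D}$ throughout the chain yields $\Phi_{D}-\Phi_{D}^{+}\leq\varphi_{v}+\Phi_{D}\leq\Phi_{D}+\Phi_{D}^{-}$; using $\Phi_{D}-\Phi_{D}^{+}=-\Phi_{D}^{-}$ and $\Phi_{D}+\Phi_{D}^{-}=\Phi_{D}^{+}$ gives $-\Phi_{D}^{-}\leq\varphi_{v}+\Phi_{D}\leq\Phi_{D}^{+}$, whence $\left\Vert\varphi_{v}+\Phi_{D}\right\Vert_{\infty}\leq\left\Vert\Phi_{D}\right\Vert_{\infty}$.

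The main obstacle, such as it is, lies in verifying that the two truncations genuinely belong to $H_{0}^{1}(\Omega)$ and vanish on $\partial\Omega$ in the trace sense; this hinges on the sign conditions $\Phi_{D}^{\pm}\geq0$ together with the homogeneous boundary datum $\varphi_{v}=0$, which is exactly what makes the positive/negative parts admissible test functions. Once the correct comparison functions $\pm\Phi_{D}^{\mp}$ are identified, the sign of the reaction term $\left(\varphi_{v}+\Phi_{D}\right)\left(v+U\right)^{2}$ on each active set does all the work, and the argument is completely independent of $v$, which is precisely why the bound is uniform in $v\in H_{0}^{1}(\Omega)$.
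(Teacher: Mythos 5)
Your argument takes a different route from the paper's (which decomposes $\varphi_v=\tilde{\varphi}_v-\hat{\varphi}_v$ into solutions of two auxiliary problems with data $\Phi_D^-$ and $\Phi_D^+$, proves positivity by symmetrizing a convex functional, and concludes by linearity and uniqueness), but it has a genuine gap at its central step. On $\{w>0\}=\{\varphi_{v}>\Phi_{D}^{-}\}$ you claim $\nabla w=\nabla\varphi_{v}$. That would be true if $\Phi_{D}^{-}$ were a constant, but here $\Phi_{D}^{\pm}$ are pointwise positive/negative parts of the nonconstant function $\Phi_{D}$, so on the active set $\nabla w=\nabla\varphi_{v}-\nabla\Phi_{D}^{-}$. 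Carrying the extra term through, testing the equation with $w$ gives
\[
\left\Vert \nabla w\right\Vert _{2}^{2}\leq\left\Vert \nabla w\right\Vert _{2}^{2}+\int_{\Omega}\left(  \varphi_{v}+\Phi_{D}\right)  \left(  v+U\right)  ^{2}w\,dx=-\int_{\Omega}\nabla\Phi_{D}^{-}\cdot\nabla w\,dx,
\]
and the right-hand side has the wrong sign for your conclusion: $\Phi_{D}^{-}=\max\{0,-\Phi_{D}\}$ is the maximum of two harmonic functions, hence \emph{subharmonic}, so $-\int_{\Omega}\nabla\Phi_{D}^{-}\cdot\nabla w\,dx\geq0$ for every nonnegative $w\in H_{0}^{1}(\Omega)$, and the displayed inequality is vacuous. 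A comparison function in a truncation argument of this type must be a supersolution (superharmonic would suffice); $\Phi_{D}^{-}$ is the opposite, with $\Delta\Phi_{D}^{-}$ a nonnegative measure carried by the nodal set $\{\Phi_{D}=0\}$. The same objection applies to your lower bound via $\left(\varphi_{v}+\Phi_{D}^{+}\right)^{-}$.

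Moreover, the gap is not a removable technicality: when $\Phi_{D}$ changes sign, the pointwise inequality \eqref{stimamaxmin} forces $\varphi_{v}=0$ a.e.\ on $\{\Phi_{D}=0\}$, and the equation does not imply this (take $(v+U)^{2}$ very large near the portion of $\partial\Omega$ where $\Phi_{D}>0$ and negligible elsewhere; then $\varphi_{v}\approx-\Phi_{D}<0$ there and, being harmonic where $(v+U)^{2}$ vanishes, it remains strictly negative across the nodal set — a one-dimensional model makes this explicit). Note that the paper's own proof is delicate at exactly the same point: its contradiction step treats $\tilde{\varphi}_{v}-\Phi_{D}^{-}$ as a solution of $\Delta w=w(v+U)^{2}$ on the set where it is positive, which likewise discards $\Delta\Phi_{D}^{-}$; both arguments are fully correct precisely when $\Phi_{D}$ has constant sign, for then one of $\Phi_{D}^{\pm}$ vanishes and the other equals $\pm\Phi_{D}$, both harmonic. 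What your computation does prove verbatim is the version with constants: replacing $\Phi_{D}^{-},\Phi_{D}^{+}$ by $\max\{0,-\inf_{\Omega}\Phi_{D}\}$ and $\max\{0,\sup_{\Omega}\Phi_{D}\}$ (nonnegative, dominating $-\Phi_{D}$ resp.\ $\Phi_{D}$, and with zero gradient), the same test functions yield $-\max\{0,\sup_{\Omega}\Phi_{D}\}\leq\varphi_{v}\leq\max\{0,-\inf_{\Omega}\Phi_{D}\}$, hence the first line of \eqref{stimaphi2}. The second line of \eqref{stimaphi2} — the bound the paper actually uses, e.g.\ in Proposition \ref{nice} — then requires one further observation in place of your closing algebra: since $\Phi_{D}$ is harmonic, $\psi=\varphi_{v}+\Phi_{D}$ solves $\Delta\psi=\psi(v+U)^{2}$ with $\psi=\Phi_{D}$ on $\partial\Omega$, and truncating $\psi$ at the constant levels $\pm\left\Vert \Phi_{D}\right\Vert _{\infty}$ gives $\left\Vert \varphi_{v}+\Phi_{D}\right\Vert _{\infty}\leq\left\Vert \Phi_{D}\right\Vert _{\infty}$.
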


\begin{proof}
Fix $v\in H_{0}^{1}\left(  \Omega\right)  $. Let $\tilde{\varphi}_{v}$ be the
unique solution of%
\[
\left\{
\begin{array}
[c]{ll}%
\Delta\varphi=\left(  \varphi-\Phi_{D}^{-}\right)  \left(  v+U\right)
^{2},\quad & \\
\varphi=0 & \text{on }\partial\Omega.
\end{array}
\right.
\]
We claim that
\[
0\leq\tilde{\varphi}_{v}\leq\Phi_{D}^{-},\qquad\text{a.e. in }\Omega.
\]
Indeed, since $\tilde{\varphi}_{v}$ is the minimum of the functional
\[
f\left(  \varphi\right)  =\frac{1}{2}\int_{\Omega}|\nabla\varphi|^{2}%
dx+\frac{1}{2}\int_{\Omega}\varphi^{2}\left(  v+U\right)  ^{2}dx-\int_{\Omega
}\varphi\Phi_{D}^{-}\left(  v+U\right)  ^{2}dx
\]
and
\[
f\left(  |\tilde{\varphi}_{v}|\right)  \leq f\left(  \tilde{\varphi}%
_{v}\right)  ,
\]
we have that $\tilde{\varphi}_{v}$ is positive.

On the other hand, suppose that
\begin{equation}
0<\tilde{\varphi}_{v}-\Phi_{D}^{-},\qquad\text{a.e. in }\Omega_{1}%
\subset\Omega. \label{contr}%
\end{equation}
Since $\tilde{\varphi}_{v}-\Phi_{D}^{-}$ solves%
\[
\left\{
\begin{array}
[c]{ll}%
\Delta w=w\left(  v+U\right)  ^{2} & \text{in }\Omega_{1},\\
w=0 & \text{on }\partial\Omega_{1},\\
w>0 & \text{in }\Omega_{1},
\end{array}
\right.
\]
then%
\[
-\left\Vert \nabla\left(  \tilde{\varphi}_{v}-\Phi_{D}^{-}\right)  \right\Vert
_{2}^{2}=\int_{\Omega_{1}}\left(  \tilde{\varphi}_{v}-\Phi_{D}^{-}\right)
^{2}\left(  v+U\right)  ^{2}dx.
\]
This implies that $\tilde{\varphi}_{v}-\Phi_{D}^{-}=0$ a.e. in $\Omega_{1}$,
which contradicts (\ref{contr}).

Analogously, the unique solution $\hat{\varphi}_{v}$ of%
\[
\left\{
\begin{array}
[c]{ll}%
\Delta\varphi=\left(  \varphi-\Phi_{D}^{+}\right)  \left(  v+U\right)
^{2},\quad & \\
\varphi=0 & \text{on }\partial\Omega,
\end{array}
\right.
\]
satisfies%
\[
0\leq\hat{\varphi}_{v}\leq\Phi_{D}^{+},\qquad\text{a.e. in }\Omega\text{.}%
\]

Thus, linearity and uniqueness imply that%
\[
\tilde{\varphi}_{v}\equiv\varphi_{v}^{+}\qquad\text{and}\qquad\hat{\varphi
}_{v}\equiv\varphi_{v}^{-}%
\]
and estimate (\ref{stimamaxmin}) is proved.
\end{proof}

\begin{proposition}
\label{nice}Under the assumption (\ref{hyp2}), the functional $J$ is bounded
from below, coercive and satisfies the Palais-Smale condition.
\end{proposition}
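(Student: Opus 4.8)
The plan is to establish the three properties in turn, using the uniform $L^{\infty}$ bounds of Lemma \ref{stimaphi} to tame the coupling term in $J$. First I would bound that term from below: since (\ref{stimaphi2}) gives $\left\Vert \varphi_{v}+\Phi_{D}\right\Vert_{\infty}\leq\left\Vert \Phi_{D}\right\Vert_{\infty}$, we have, uniformly in $v$,
\[
\left\vert \int_{\Omega}\Phi_{D}\left(\varphi_{v}+\Phi_{D}\right)\left(v+U\right)^{2}\,dx\right\vert \leq\left\Vert \Phi_{D}\right\Vert_{\infty}^{2}\left\Vert v+U\right\Vert_{2}^{2},
\]
so that
\[
J(v)\geq\frac{1}{2}\left\Vert \nabla v\right\Vert_{2}^{2}+\frac{m^{2}}{2}\left\Vert v\right\Vert_{2}^{2}-\frac{1}{2}\left\Vert \Phi_{D}\right\Vert_{\infty}^{2}\left\Vert v+U\right\Vert_{2}^{2}.
\]

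Next I would check that this lower bound is a coercive quadratic functional of $v$. Writing $\mu=\left\Vert \Phi_{D}\right\Vert_{\infty}^{2}$ and expanding $\left\Vert v+U\right\Vert_{2}^{2}=\left\Vert v\right\Vert_{2}^{2}+2\int_{\Omega}vU\,dx+\left\Vert U\right\Vert_{2}^{2}$, the purely quadratic part in $v$ is $\frac{1}{2}\left\Vert \nabla v\right\Vert_{2}^{2}+\frac{1}{2}(m^{2}-\mu)\left\Vert v\right\Vert_{2}^{2}$. By the Poincaré inequality $\left\Vert \nabla v\right\Vert_{2}^{2}\geq\lambda_{1}\left\Vert v\right\Vert_{2}^{2}$ and the hypothesis (\ref{hyp2}), i.e. $\mu<m^{2}+\lambda_{1}$, this part is bounded below by $\frac{c_{0}}{2}\left\Vert \nabla v\right\Vert_{2}^{2}$ for a constant $c_{0}=\min\{1,(\lambda_{1}+m^{2}-\mu)/\lambda_{1}\}>0$; the cross term is controlled by $-\mu\int_{\Omega}vU\,dx\geq-\lambda_{1}^{-1/2}\mu\left\Vert U\right\Vert_{2}\left\Vert \nabla v\right\Vert_{2}$, and $-\frac{\mu}{2}\left\Vert U\right\Vert_{2}^{2}$ is constant. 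Thus $J(v)$ is bounded below by a second-degree polynomial in $\left\Vert \nabla v\right\Vert_{2}$ with positive leading coefficient, which yields at once that $J$ is bounded from below and coercive.

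For the Palais-Smale condition, take $(v_{n})$ with $J(v_{n})$ bounded and $J^{\prime}(v_{n})\to0$. Coercivity makes $(v_{n})$ bounded in $H_{0}^{1}(\Omega)$, so along a subsequence $v_{n}\rightharpoonup v$ in $H_{0}^{1}(\Omega)$ and, by the compact embedding $H_{0}^{1}(\Omega)\hookrightarrow L^{2}(\Omega)$, $v_{n}\to v$ strongly in $L^{2}(\Omega)$. To promote this to strong convergence in $H_{0}^{1}$ I would evaluate $\langle J^{\prime}(v_{n})-J^{\prime}(v),v_{n}-v\rangle$, which tends to $0$ (the term $\langle J^{\prime}(v_{n}),v_{n}-v\rangle\to0$ since $J^{\prime}(v_{n})\to0$ and $(v_{n}-v)$ is bounded, while $\langle J^{\prime}(v),v_{n}-v\rangle\to0$ since $v_{n}-v\rightharpoonup0$). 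Expanding the bracket gives
\[
\left\Vert \nabla(v_{n}-v)\right\Vert_{2}^{2}+m^{2}\left\Vert v_{n}-v\right\Vert_{2}^{2}=\langle J^{\prime}(v_{n})-J^{\prime}(v),v_{n}-v\rangle+\int_{\Omega}\left[\left(\varphi_{v_{n}}+\Phi_{D}\right)^{2}(v_{n}+U)-\left(\varphi_{v}+\Phi_{D}\right)^{2}(v+U)\right](v_{n}-v)\,dx.
\]
Because $\left\Vert \varphi_{v_{n}}+\Phi_{D}\right\Vert_{\infty}\leq\left\Vert \Phi_{D}\right\Vert_{\infty}$ and $\left\Vert \varphi_{v}+\Phi_{D}\right\Vert_{\infty}\leq\left\Vert \Phi_{D}\right\Vert_{\infty}$, both factors $\left(\varphi_{v_{n}}+\Phi_{D}\right)^{2}(v_{n}+U)$ and $\left(\varphi_{v}+\Phi_{D}\right)^{2}(v+U)$ are bounded in $L^{2}(\Omega)$, so pairing them with $v_{n}-v\to0$ in $L^{2}$ and applying Cauchy-Schwarz shows the integral tends to $0$. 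Hence $\left\Vert \nabla(v_{n}-v)\right\Vert_{2}\to0$, giving the desired strong convergence.

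The crux is the coupling term: without an a priori bound on $\varphi_{v}$ neither the lower bound for $J$ nor the control of the nonlinearity in the Palais-Smale step would be available. The uniform $L^{\infty}$ estimates of Lemma \ref{stimaphi} settle both points simultaneously, since they dominate the coupling term by $\left\Vert \Phi_{D}\right\Vert_{\infty}^{2}\left\Vert v+U\right\Vert_{2}^{2}$ --- where (\ref{hyp2}) and Poincaré then force coercivity --- and they let the nonlinear term in the Palais-Smale computation vanish through $L^{2}$-compactness alone, with no need to prove separately that $\varphi_{v_{n}}$ converges.
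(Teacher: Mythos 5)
Your proposal is correct, and its first two parts coincide with the paper's proof: both dominate the coupling term via the uniform bound $\left\Vert \varphi_{v}+\Phi_{D}\right\Vert _{\infty}\leq\left\Vert \Phi_{D}\right\Vert _{\infty}$ from Lemma \ref{stimaphi}, expand $\left\Vert v+U\right\Vert _{2}^{2}$, and combine Poincar\'e with (\ref{hyp2}); your constant $c_{0}=\min\left\{  1,(\lambda_{1}+m^{2}-\mu)/\lambda_{1}\right\}$ is exactly the paper's factor $\left(  \lambda_{1}-\max\left\{  0,\left\Vert \Phi_{D}\right\Vert _{\infty}^{2}-m^{2}\right\}  \right)  /\lambda_{1}$. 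The only genuine divergence is in how the Palais--Smale condition is closed. The paper writes the equation
\[
\Delta v_{n}=m^{2}v_{n}-\left(  \varphi_{v_{n}}+\Phi_{D}\right)  ^{2}\left(  v_{n}+U\right)  -J^{\prime}\left(  v_{n}\right)
\]
and argues that, by (\ref{stimaphi2}), the right-hand side is controlled, so that inverting the Laplacian yields strong $H_{0}^{1}$ convergence of a subsequence; implicitly this uses that the first two terms are bounded in $L^{2}$, which is compactly embedded in $H^{-1}$ (boundedness in $H^{-1}$ alone, as literally stated, would not suffice). You instead test $J^{\prime}\left(  v_{n}\right)  -J^{\prime}\left(  v\right)$ against $v_{n}-v$ and use the compact embedding $H_{0}^{1}\left(  \Omega\right)  \hookrightarrow L^{2}\left(  \Omega\right)$ to make the nonlinear term vanish. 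Both devices are standard and rest on the same key ingredient, the uniform $L^{\infty}$ estimate of Lemma \ref{stimaphi}; your route has the minor merit of being fully self-contained and of avoiding the terseness just mentioned, while the paper's route is shorter and recycles verbatim in the proof of Lemma \ref{PS lemma} for the nonlinear problem.
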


\begin{proof}
We have%
\begin{align*}
J\left(  v\right)   &  \geq\frac{1}{2}\left\Vert \nabla v\right\Vert _{2}%
^{2}+\frac{m^{2}}{2}\left\Vert v\right\Vert _{2}^{2}-\frac{\left\Vert \Phi
_{D}\right\Vert _{\infty}^{2}}{2}\left\Vert v+U\right\Vert _{2}^{2}\\
&  \geq\frac{1}{2}\left\Vert \nabla v\right\Vert _{2}^{2}-\frac{\left\Vert
\Phi_{D}\right\Vert _{\infty}^{2}-m^{2}}{2}\left\Vert v\right\Vert _{2}%
^{2}-\frac{\left\Vert \Phi_{D}\right\Vert _{\infty}^{2}}{2}\left\Vert
U\right\Vert _{2}^{2}-\left\Vert \Phi_{D}\right\Vert _{\infty}^{2}\left\Vert
v\right\Vert _{2}\left\Vert U\right\Vert _{2}\\
&  \geq\frac{\lambda_{1}-\max\left\{  0,\left\Vert \Phi_{D}\right\Vert
_{\infty}^{2}-m^{2}\right\}  }{2\lambda_{1}}\left\Vert \nabla v\right\Vert
_{2}^{2}-\frac{\left\Vert \Phi_{D}\right\Vert _{\infty}^{2}}{2}\left\Vert
U\right\Vert _{2}^{2}-c\left\Vert \nabla v\right\Vert _{2}.
\end{align*}
Taking into account (\ref{hyp2}), we deduce that $J$ is bounded from below and coercive.

Let $\left\{  v_{n}\right\}  $ be a P-S sequence for $J$, that is a sequence
such that $\left\{  J\left(  u_{n}\right)  \right\}  $ is bounded and
$\left\{  J^{\prime}\left(  v_{n}\right)  \right\}  $ tends to zero. Since $J$
is coercive, the sequence $\left\{  v_{n}\right\}  $ is bounded in $H_{0}%
^{1}\left(  \Omega\right)  $ and, up to subtracting a subsequence, it is
weakly convergent. By (\ref{stimaphi2}), $\left\{  \left(  \varphi_{v_{n}%
}+\Phi_{D}\right)  ^{2}\right\}  $ is uniformly bounded in $L^{\infty}\left(
\Omega\right)  ,$ hence the right hand side of%
\[
\Delta v_{n}=m^{2}v_{n}-\left(  \varphi_{v_{n}}+\Phi_{D}\right)  ^{2}%
(v_{n}+U)-J^{\prime}\left(  v_{n}\right)
\]
is bounded in $H^{-1}\left(  \Omega\right)  $. The claim immediately follows.
\end{proof}

By a standard argument, the functional $J$ has a minimum and the first
statement of Theorem \ref{th1} is thereby proved$.$

Now we prove the second statement.

First we notice that, if $h=0$, then $U=0$. Problem (\ref{prob2}) becomes%
\begin{equation}
\left\{
\begin{array}
[c]{ll}%
-\Delta v-\left(  \varphi+\Phi_{D}\right)  ^{2}v+m^{2}v=0, & \\
\Delta\varphi=\left(  \varphi+\Phi_{D}\right)  v^{2}, & \\
v=\varphi=0 & \text{on }\partial\Omega.
\end{array}
\right.  \label{probchilindir}%
\end{equation}
If $\left(  v,\varphi\right)  $ is a solution of (\ref{probchilindir}), by the
first equation we have
\begin{equation}
\left\Vert \nabla v\right\Vert _{2}^{2}-\int_{\Omega}\left(  \varphi+\Phi
_{D}\right)  ^{2}v^{2}\,dx+m^{2}\left\Vert v\right\Vert _{2}^{2}=0.
\label{banal10}%
\end{equation}
Substituting (\ref{varphiquad}) into (\ref{banal10}) we get%
\[
\left\Vert \nabla v\right\Vert _{2}^{2}+\int_{\Omega}\varphi^{2}%
v^{2}\,dx+2\left\Vert \nabla\varphi\right\Vert _{2}^{2}+\int_{\Omega}\left(
m^{2}-\Phi_{D}^{2}\right)  v^{2}dx=0.
\]
Then%
\begin{align*}
0  &  \geq\lambda_{1}\left\Vert v\right\Vert _{2}^{2}+\int_{\Omega}\varphi
^{2}v^{2}\,dx+2\left\Vert \nabla\varphi\right\Vert _{2}^{2}+\int_{\Omega
}\left(  m^{2}-\Phi_{D}^{2}\right)  v^{2}dx\\
&  \geq\left(  \lambda_{1}+m^{2}-\left\Vert \Phi_{D}\right\Vert _{\infty}%
^{2}\right)  \left\Vert v\right\Vert _{2}^{2}+2\left\Vert \nabla
\varphi\right\Vert _{2}^{2}%
\end{align*}
and, by (\ref{hyp2}), we conclude that $v=\varphi=0$. Since $u=v$, the claim follows.

\section{Proof of Theorem \ref{thmix}}

We consider again the function $U\neq0$ defined by (\ref{def U}).

On the other hand, let $\Phi_{N}$ denote the unique solution of
\[
\left\{
\begin{array}
[c]{ll}%
\Delta\Phi_{N}=q\kappa, & \\
{\dfrac{\partial\Phi_{N}}{\partial\mathbf{n}}}=q\theta & \text{on }%
\partial\Omega,\vspace{3pt}\\
{\int_{\Omega}\Phi_{N}\,dx=0.\quad} &
\end{array}
\right.
\]
with
\[
\kappa=\frac{1}{\left\vert \Omega\right\vert }\int_{\partial\Omega}%
\theta\,d\sigma.
\]
It is well known that $\left\Vert \Phi_{N}\right\Vert _{\infty}\leq
c\left\vert q\right\vert \left\Vert \theta\right\Vert _{H^{1/2}\left(
\partial\Omega\right)  }$: we choose $\left\vert q\right\vert \!\left\Vert
\theta\right\Vert _{H^{1/2}\left(  \partial\Omega\right)  }$ small enough to
get
\[
m^{2}-\Phi_{N}^{2}\geq0.
\]

If we set%
\[
v=u-U,\qquad\varphi=\phi-\Phi_{N},
\]
then, problem (\ref{Main1})-(\ref{Mix1}) becomes%
\begin{equation}
\left\{
\begin{array}
[c]{ll}%
-\Delta v-\left(  \varphi+\Phi_{N}\right)  ^{2}\left(  v+U\right)
+m^{2}v=0,\quad & \\
\Delta\varphi=\left(  \varphi+\Phi_{N}\right)  \left(  v+U\right)
^{2}-q\kappa, & \\
v=0 & \text{on }\partial\Omega,\\
\displaystyle{\frac{\partial\varphi}{\partial\mathbf{n}}}=0 & \text{on
}\partial\Omega.
\end{array}
\right.  \label{mixprobl2}%
\end{equation}

The following lemma has been proved in \cite[Lemma 2.3]{DPS2007}.

\begin{lemma}
For every $w\in H^{1}\left(  \Omega\right)  \setminus\left\{  0\right\}  $ and
$\rho\in L^{6/5}\left(  \Omega\right)  $, the problem%
\[
\left\{
\begin{array}
[c]{ll}%
-\Delta\varphi+\varphi w^{2}=\rho,\quad & \\
{\dfrac{\partial\varphi}{\partial\mathbf{n}}=0} & \text{on }\partial\Omega
\end{array}
\right.
\]
has a unique solution in $H^{1}\left(  \Omega\right)  $.
\end{lemma}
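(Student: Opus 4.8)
The plan is to recast the problem in weak form and invoke the Lax--Milgram theorem. A function $\varphi\in H^{1}(\Omega)$ is a weak solution precisely when
\[
a(\varphi,\psi):=\int_{\Omega}\nabla\varphi\cdot\nabla\psi\,dx+\int_{\Omega}\varphi\psi\,w^{2}\,dx=\int_{\Omega}\rho\psi\,dx\qquad\text{for all }\psi\in H^{1}(\Omega).
\]
Testing against \emph{all} of $H^{1}(\Omega)$, rather than $H_{0}^{1}(\Omega)$, is exactly what encodes the homogeneous Neumann condition as a natural boundary condition. The bilinear form $a$ is symmetric, so the solution (if it exists) is the unique minimizer of the strictly convex functional $\tfrac12 a(\varphi,\varphi)-\int_{\Omega}\rho\varphi\,dx$; it therefore suffices to verify the three hypotheses of Lax--Milgram.

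Continuity of $a$ and of the right-hand side are routine. Since $\Omega\subset\mathbf{R}^{3}$ and $w\in H^{1}(\Omega)\hookrightarrow L^{6}(\Omega)$, we have $w^{2}\in L^{3}(\Omega)$, and Hölder's inequality with exponents $(3,3,3)$ yields $\bigl|\int_{\Omega}\varphi\psi\,w^{2}\bigr|\le\|w^{2}\|_{3}\|\varphi\|_{3}\|\psi\|_{3}\le C\|w\|_{6}^{2}\|\varphi\|_{H^{1}}\|\psi\|_{H^{1}}$, so $a$ is bounded. For the linear functional, $\rho\in L^{6/5}(\Omega)$ pairs with $\psi\in L^{6}(\Omega)$, giving $\bigl|\int_{\Omega}\rho\psi\bigr|\le\|\rho\|_{6/5}\|\psi\|_{6}\le C\|\rho\|_{6/5}\|\psi\|_{H^{1}}$.

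The main obstacle is coercivity, i.e.\ the existence of $\alpha>0$ with $a(\varphi,\varphi)=\|\nabla\varphi\|_{2}^{2}+\int_{\Omega}\varphi^{2}w^{2}\ge\alpha\|\varphi\|_{H^{1}}^{2}$. The difficulty is structural: the gradient term annihilates constants, so on its own it controls only the Poincaré--Wirtinger seminorm and never the full $H^{1}$ norm. The weighted term must supply the missing $L^{2}$ control, and this is exactly where the hypothesis $w\ne0$ is indispensable. I would argue by contradiction: were no such $\alpha$ to exist, one could select $\varphi_{n}$ with $\|\varphi_{n}\|_{H^{1}}=1$ and $a(\varphi_{n},\varphi_{n})\to0$. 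Then $\nabla\varphi_{n}\to0$ in $L^{2}$, and by Rellich--Kondrachov $\varphi_{n}\to\varphi$ strongly in $L^{2}$ along a subsequence. The $L^{2}$ convergence forces $\|\varphi\|_{2}=1$ (since $\|\varphi_{n}\|_{2}^{2}=1-\|\nabla\varphi_{n}\|_{2}^{2}\to1$), while $\nabla\varphi=0$ makes $\varphi$ a \emph{nonzero} constant on the connected set $\Omega$.

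To reach the contradiction I would pass to the limit in the weighted term. Writing $\varphi_{n}^{2}-\varphi^{2}=(\varphi_{n}-\varphi)(\varphi_{n}+\varphi)$ and applying Hölder with exponents $(2,6,3)$ gives
\[
\Bigl|\int_{\Omega}(\varphi_{n}^{2}-\varphi^{2})\,w^{2}\,dx\Bigr|\le\|\varphi_{n}-\varphi\|_{2}\,\|\varphi_{n}+\varphi\|_{6}\,\|w^{2}\|_{3}\longrightarrow0,
\]
because $\|\varphi_{n}-\varphi\|_{2}\to0$ while $\|\varphi_{n}+\varphi\|_{6}$ stays bounded via the Sobolev embedding. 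Hence $\int_{\Omega}\varphi_{n}^{2}w^{2}\to\varphi^{2}\int_{\Omega}w^{2}>0$, contradicting $a(\varphi_{n},\varphi_{n})\to0$. This establishes coercivity, and Lax--Milgram then delivers the unique solution $\varphi\in H^{1}(\Omega)$.
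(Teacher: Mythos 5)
Your proof is correct, but note that the paper itself contains no proof of this lemma to compare against: it is quoted verbatim from the preprint \cite[Lemma 2.3]{DPS2007}, so your argument is a self-contained replacement rather than a variant of something in the text. The route you take is the natural one, and every step checks out. The weak formulation over all of $H^{1}(\Omega)$ correctly encodes the Neumann condition; continuity of the form follows from $w^{2}\in L^{3}(\Omega)$ and the embedding $H^{1}(\Omega)\hookrightarrow L^{6}(\Omega)$; and the pairing of $\rho\in L^{6/5}(\Omega)$ with $L^{6}(\Omega)$ is exactly why that exponent appears in the hypothesis. The heart of the matter is coercivity, and you identify it correctly: $\|\nabla\varphi\|_{2}^{2}$ alone kills constants, so the weighted term $\int_{\Omega}\varphi^{2}w^{2}\,dx$ must rescue the $L^{2}$ part, which is precisely where $w\neq0$ enters. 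Your contradiction argument is sound: normalizing $\|\varphi_{n}\|_{H^{1}}=1$ with $a(\varphi_{n},\varphi_{n})\to0$ forces (along a subsequence, via Rellich--Kondrachov and weak convergence of gradients) a limit $\varphi$ that is a nonzero constant on the connected set $\Omega$, while the Hölder estimate with exponents $(2,6,3)$ passes the weighted term to the limit, giving $\int_{\Omega}\varphi_{n}^{2}w^{2}\,dx\to\varphi^{2}\|w\|_{2}^{2}>0$, contradicting $a(\varphi_{n},\varphi_{n})\to0$. Two small points worth making explicit if you write this up: the normalization step uses homogeneity of $a$ (replace the failed coercivity inequality by a normalized sequence), and connectedness of $\Omega$ is genuinely used (for a disconnected $\Omega$ the conclusion can fail if $w$ vanishes on a component, so the word ``domain'' is doing real work). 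With those remarks, the proof is complete.
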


Since $U=\sqrt{4\pi}qh\neq0$ on $\partial\Omega$, for every $v\in H_{0}%
^{1}\left(  \Omega\right)  $ we have $v+U\neq0$. Hence, by the previous lemma,
the problem
\[
\left\{
\begin{array}
[c]{ll}%
\Delta\varphi=\left(  \varphi+\Phi_{N}\right)  \left(  v+U\right)
^{2}-q\kappa, & \\
\displaystyle{\frac{\partial\varphi}{\partial\mathbf{n}}}=0 & \text{on
}\partial\Omega
\end{array}
\right.
\]
has always a unique solution $\varphi_{v}\in H^{1}\left(  \Omega\right)  $.

As well as in the previous section, we use a variational principle: we look
for the critical points of a \emph{reduced} functional $J=J(v)$ defined in
$H_{0}^{1}\left(  \Omega\right)  $; then the solutions of (\ref{mixprobl2})
are the pairs $(v,\varphi_{v})$.

The reduced functional has the form%

\begin{multline*}
J\left(  v\right)  =\frac{1}{2}\left\Vert \nabla v\right\Vert _{2}^{2}%
+\frac{1}{2}\int_{\Omega}\left(  m^{2}-\Phi_{N}^{2}\right)  \left(
v+U\right)  ^{2}dx+\\
-\frac{1}{2}\int_{\Omega}\Phi_{N}\left(  v+U\right)  ^{2}\varphi_{v}%
\,dx+\frac{q\kappa}{2}\int_{\Omega}\varphi_{v}\,dx.
\end{multline*}

With the aim of studying the functional $J$, as in \cite[Section 3]{DPS2007},
we consider
\[
\varphi_{v}=\xi_{v}+\eta_{v},
\]
where $\xi_{v}$ and $\eta_{v}$ solve respectively%
\[
\left\{
\begin{array}
[c]{ll}%
\Delta\xi-\left(  v+U\right)  ^{2}\xi=\left(  v+U\right)  ^{2}\Phi_{N},\quad &
\\
\frac{\partial\xi}{\partial\mathbf{n}}=0 & \text{on }\partial\Omega,
\end{array}
\right.
\]%
\[
\left\{
\begin{array}
[c]{ll}%
\Delta\eta-\left(  v+U\right)  ^{2}\eta=-q\kappa,\quad & \\
\frac{\partial\eta}{\partial\mathbf{n}}=0 & \text{on }\partial\Omega.
\end{array}
\right.
\]
We obtain the following estimates%
\begin{gather*}
\int_{\Omega}\xi_{v}\Phi_{N}\left(  v+U\right)  ^{2}dx\leq0,\\
-\max\Phi_{N}\leq\xi_{v}\leq-\min\Phi_{N},\\
q\kappa\eta_{v}\geq0,\\
\left\Vert \nabla\eta_{v}\right\Vert _{2}\leq c\left\vert \bar{\eta}%
_{v}\right\vert \left\Vert v+U\right\Vert _{4}^{2},
\end{gather*}
where $c>0$ and $\bar{\eta}_{v}$ is the average of $\eta_{v}$.

Then we can show that the functional $J$ is coercive, bounded from below and
satisfies the PS condition. So we get the existence of a minimum of $J$, which
gives rise to a solution of (\ref{mixprobl2}).

\section{Proof of Theorem \ref{thnonlin}}

With the same change of variables of Section \ref{DDsect} (with $U=0$, since
$h=0$), problem (\ref{prob1nl})-(\ref{Dir2}) can be written as%
\begin{equation}
\left\{
\begin{array}
[c]{ll}%
-\Delta v-\left(  \varphi+\Phi_{D}\right)  ^{2}v+m^{2}v-g\left(  x,v\right)
=0,\quad & \\
\Delta\varphi=\left(  \varphi+\Phi_{D}\right)  v^{2}, & \\
v=\varphi=0 & \text{on }\partial\Omega.
\end{array}
\right.  \label{probchidir}%
\end{equation}
As in the previous sections, the solutions of (\ref{probchidir}) have the form
$(v,\varphi_{v})$, where $v$ is a critical point of the \emph{reduced}
functional%
\[
J\left(  v\right)  ={\frac{1}{2}\left\Vert \nabla v\right\Vert _{2}^{2}%
+\frac{m^{2}}{2}\int_{\Omega}v^{2}dx-\frac{1}{2}\int_{\Omega}\Phi_{D}\left(
\varphi_{v}+\Phi_{D}\right)  v^{2}\,dx-\int_{\Omega}G\left(  x,v\right)  \,dx}%
\]
which is $C^{1}$ in $H_{0}^{1}\left(  \Omega\right)  $.

Let us recall that the well-known conditions $\mathbf{\left(  g_{1}\right)
}-\mathbf{\left(  g_{3}\right)  }$ imply that:

\begin{enumerate}
\item[$\left(  \mathbf{G}_{1}\right)  $] for every $\varepsilon>0$ there
exists $A\geq0$ such that for every $t\in\mathbf{R}$
\[
\left|  G\left(  x,t\right)  \right|  \leq\frac{\varepsilon}{2} t^{2}+A\left|
t\right|  ^{p};
\]

\item[$\left(  \mathbf{G}_{2}\right)  $] there exist two constants
$b_{1},b_{2}>0$ such that for every $t\in\mathbf{R}$
\[
G\left(  x,t\right)  \geq b_{1}\left\vert t\right\vert ^{s}-b_{2}.
\]

\end{enumerate}

Now we can state some properties of $J$.

\begin{lemma}
\label{PS lemma}The functional $J$ satisfies the Palais-Smale condition on
$H_{0}^{1}(\Omega)$ and diverges negatively on every finite dimensional
subspace of $H_{0}^{1}(\Omega)$.
\end{lemma}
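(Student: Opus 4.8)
The plan is to prove the two assertions by the standard machinery behind a symmetric mountain--pass (fountain) argument, treating compactness (the Palais--Smale condition) and the geometric behaviour on finite dimensional subspaces separately. Throughout I would lean on two facts that carry over verbatim from the Dirichlet analysis, because here $\varphi_v$ solves the same equation with $U=0$: the uniform bound $\left\Vert \varphi_v\right\Vert_\infty \le \left\Vert \Phi_D\right\Vert_\infty$ together with the pointwise sign $\Phi_D(\varphi_v+\Phi_D)\ge 0$ (as in Lemma \ref{stimaphi}), and the identity
\[
\left\Vert \nabla\varphi_v\right\Vert_2^2 + \int_\Omega \varphi_v^2 v^2\,dx = -\int_\Omega \varphi_v\Phi_D v^2\,dx ,
\]
which is (\ref{varphiquad}) with $U=0$. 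This identity lets me recast the functional as
\[
J(v) = \frac12\left\Vert \nabla v\right\Vert_2^2 + \frac12\int_\Omega(m^2-\Phi_D^2)v^2\,dx + \frac12\left\Vert \nabla\varphi_v\right\Vert_2^2 + \frac12\int_\Omega\varphi_v^2 v^2\,dx - \int_\Omega G(x,v)\,dx ,
\]
in which every term is nonnegative except the sign--indefinite quadratic form and $-\int_\Omega G$.

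For the Palais--Smale condition the decisive step is the boundedness of a P--S sequence $\{v_n\}$, and this is where I expect the main obstacle to lie, since the electrostatic coupling $\int_\Omega\Phi_D(\varphi_v+\Phi_D)v^2\,dx$ is genuinely quadratic in $v$ and must be absorbed without spoiling the superlinear gap $s>2$ of $(\mathbf{g}_3)$. I would estimate the combination $J(v_n)-\frac1s\langle J'(v_n),v_n\rangle$: by $(\mathbf{g}_3)$ the nonlinear contribution $\int_\Omega(\frac1s g(x,v_n)v_n - G(x,v_n))\,dx$ is bounded below, the coupling terms are controlled via the identity above and the $L^\infty$ bound on $\varphi_{v_n}$, and the surviving positive part is $(1-\frac2s)$ times the quadratic form. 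When (\ref{hyp}) holds the quadratic form is coercive, because the operator $-\Delta+m^2-\Phi_D^2$ is positive (here $\lambda_1+m^2-\left\Vert \Phi_D\right\Vert_\infty^2>0$), and boundedness of $\left\Vert \nabla v_n\right\Vert_2$ is then immediate. In the general case required for the multiplicity statement I would instead split $H_0^1(\Omega)$ along the finitely many nonpositive eigendirections of $-\Delta+m^2-\Phi_D^2$, on which the $L^s$--coercivity coming from $(\mathbf{G}_2)$ dominates the bounded negative quadratic contribution, while the quadratic form is coercive on the complementary subspace.

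Once $\{v_n\}$ is bounded I would extract a weakly convergent subsequence $v_n\rightharpoonup v$ and upgrade to strong convergence in the usual manner. From $J'(v_n)\to0$ and $v_n-v\rightharpoonup0$ one gets $\langle J'(v_n)-J'(v),v_n-v\rangle\to0$, and
\[
\langle J'(v_n)-J'(v),v_n-v\rangle = \left\Vert \nabla(v_n-v)\right\Vert_2^2 + m^2\left\Vert v_n-v\right\Vert_2^2 - R_n ,
\]
where $R_n$ collects the coupling and the nonlinear remainders. These vanish thanks to the compact embeddings $H_0^1(\Omega)\hookrightarrow L^q(\Omega)$ for $q<6$, the growth bound $(\mathbf{g}_1)$, and the continuity of $v\mapsto\varphi_v$ together with the uniform $L^\infty$ bound on $\{\varphi_{v_n}\}$; hence $v_n\to v$ strongly and $J$ satisfies (PS).

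For the negative divergence on a finite dimensional subspace $W\subset H_0^1(\Omega)$ the argument is elementary and needs no smallness assumption: discarding the nonpositive coupling term and invoking $(\mathbf{G}_2)$, $G(x,v)\ge b_1|v|^s-b_2$, I obtain
\[
J(v) \le \frac12\left\Vert \nabla v\right\Vert_2^2 + \frac{m^2}{2}\left\Vert v\right\Vert_2^2 - b_1\left\Vert v\right\Vert_s^s + b_2|\Omega| .
\]
All norms being equivalent on the finite dimensional $W$ and $s>2$, the term $-b_1\left\Vert v\right\Vert_s^s$ overwhelms the quadratic part, so $J(v)\to-\infty$ as $\left\Vert v\right\Vert\to\infty$ in $W$, which completes the lemma.
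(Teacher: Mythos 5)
Your treatment of the second assertion (negative divergence on finite dimensional subspaces) and your strong-convergence step are correct and coincide with the paper's. The genuine gap is in the boundedness of Palais--Smale sequences, which is the heart of the lemma; note that the lemma carries no hypothesis at all and is invoked for the multiplicity statement, so it must hold for \emph{every} $m,\omega,q$ and every $s\in(2,p]$. Your first branch is not ``immediate'': in $J(v_n)-\frac1s\langle J'(v_n),v_n\rangle$ the coupling terms are genuinely quadratic in $v_n$, of size $\left\Vert\Phi_D\right\Vert_\infty^2\left\Vert v_n\right\Vert_2^2$ (for instance, (\ref{varphiquad}) with $U=0$ gives $\left\Vert\nabla\varphi_{v_n}\right\Vert_2^2\le\frac14\left\Vert\Phi_D\right\Vert_\infty^2\left\Vert v_n\right\Vert_2^2$, and a term $-\frac12\left\Vert\nabla\varphi_{v_n}\right\Vert_2^2$ survives in the combination), while the coefficient left in front of the quadratic form is only $\frac{s-2}{2s}$. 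Absorbing the former into the latter therefore needs roughly $\left\Vert\Phi_D\right\Vert_\infty^2<\frac{s-2}{s}\left(\lambda_1+m^2\right)$, which is strictly stronger than (\ref{hyp}) whenever $s<4$; since $(\mathbf{g}_3)$ allows any $s\in(2,p]$, this branch fails in general. Your second branch rests on a mechanism that is not available: $(\mathbf{G}_2)$ is a \emph{lower} bound on $G$, hence it bounds $J$ from \emph{above}; in the combination $J-\frac1s\langle J',v\rangle$ the nonlinear contribution $\int_\Omega\left(\frac1s g(x,v_n)v_n-G(x,v_n)\right)dx$ is bounded below only by a constant (for $G(x,t)=|t|^s$ it vanishes identically for $|t|\ge r$), so no $L^s$-coercivity can be extracted there. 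Moreover $\int_\Omega G(x,w_n+z_n)\,dx$ does not decompose along a spectral splitting $v_n=w_n+z_n$, so ``domination on the finite dimensional part'' is not a meaningful estimate for a PS sequence having components in both subspaces.

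The paper closes exactly this gap by a contradiction argument that you are missing, and it needs no case distinction. Your first step (the combination plus the $L^\infty$ bounds of Lemma \ref{stimaphi}) already yields the paper's inequality (\ref{priori}), namely $\left\Vert v_n\right\Vert_2^2\ge c_6\left\Vert\nabla v_n\right\Vert_2^2-c_7\left\Vert\nabla v_n\right\Vert_2-c_8$. Now suppose $\left\Vert\nabla v_n\right\Vert_2\to\infty$ along a subsequence; then $\left\Vert v_n\right\Vert_2^2\ge c_9\left\Vert\nabla v_n\right\Vert_2^2\to\infty$, and applying $(\mathbf{G}_2)$ to $J(v_n)$ itself, where it enters with the favorable sign, together with H\"older's inequality $\left\Vert v_n\right\Vert_2\le|\Omega|^{\frac12-\frac1s}\left\Vert v_n\right\Vert_s$, one gets
\[
J(v_n)\le c_{10}\left\Vert v_n\right\Vert_2^2-b_1\left\Vert v_n\right\Vert_s^s+b_2\left\vert\Omega\right\vert\longrightarrow-\infty
\]
because $s>2$, contradicting $\left\vert J(v_n)\right\vert\le c_1$. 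This proves boundedness for all parameter values and all $s\in(2,p]$ without any coercivity of $-\Delta+m^2-\Phi_D^2$ and without any spectral decomposition; the remainder of your argument can then proceed as written.
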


\begin{proof}
Let $\left\{  v_{n}\right\}  \subset H_{0}^{1}\left(  \Omega\right)  $ such
that
\begin{align}
&  \left\vert J\left(  v_{n}\right)  \right\vert \leq c_{1}\label{PS1}\\
&  J^{\prime}\left(  v_{n}\right)  \rightarrow0. \label{PS2}%
\end{align}
As before, we set $\varphi_{n}=\varphi_{v_{n}}$ and we use $c_{i}$ to denote
suitable positive constants. By (\ref{PS1}), $\left(  \mathbf{g}_{1}\right)  $
and $\left(  \mathbf{g}_{3}\right)  $%
\begin{align}
\frac{1}{2}\left\Vert \nabla v_{n}\right\Vert _{2}^{2}  &  \leq c_{1}%
+\int_{\Omega}G\left(  x,v_{n}\right)  \,dx+\frac{1}{2}\int_{\Omega}\Phi
_{D}\left(  \varphi_{n}+\Phi_{D}\right)  v_{n}^{2}\,dx\nonumber\\
&  \leq c_{2}+\frac{1}{s}\int_{\left\{  x\in\Omega:\left\vert v_{n}\left(
x\right)  \right\vert \geq r\right\}  }g\left(  x,v_{n}\right)  v_{n}%
\,dx+\frac{1}{2}\left\Vert \Phi_{D}\right\Vert _{\infty}^{2}\left\Vert
v_{n}\right\Vert _{2}^{2}\nonumber\\
&  \leq c_{3}+\frac{1}{s}\int_{\Omega}g\left(  x,v_{n}\right)  v_{n}%
\,dx+\frac{1}{2}\left\Vert \Phi_{D}\right\Vert _{\infty}^{2}\left\Vert
v_{n}\right\Vert _{2}^{2}. \label{1relaz}%
\end{align}

On the other hand, by (\ref{PS2}),
\begin{align*}
\left\vert \left\Vert \nabla v_{n}\right\Vert _{2}^{2}+m^{2}\left\Vert
v_{n}\right\Vert _{2}^{2}-\int_{\Omega}\left(  \varphi_{n}+\Phi_{D}\right)
^{2}v_{n}^{2}\,dx-\int_{\Omega}g\left(  x,v_{n}\right)  v_{n}\,dx\right\vert
&  =\left\vert \left\langle J^{\prime}\left(  v_{n}\right)  ,v_{n}%
\right\rangle \right\vert \\
&  \leq c_{4}\left\Vert \nabla v_{n}\right\Vert _{2},
\end{align*}
hence
\begin{align}
\int_{\Omega}g\left(  x,v_{n}\right)  v_{n}\,dx  &  \leq c_{4}\left\Vert
\nabla v_{n}\right\Vert _{2}+\left\Vert \nabla v_{n}\right\Vert _{2}^{2}%
+m^{2}\left\Vert v_{n}\right\Vert _{2}^{2}-\int_{\Omega}\left(  \varphi
_{n}+\Phi_{D}\right)  ^{2}v_{n}^{2}\,dx\nonumber\\
&  \leq c_{4}\left\Vert \nabla v_{n}\right\Vert _{2}+\left\Vert \nabla
v_{n}\right\Vert _{2}^{2}+m^{2}\left\Vert v_{n}\right\Vert _{2}^{2}.
\label{2relaz}%
\end{align}
Therefore, substituting (\ref{2relaz}) into (\ref{1relaz}), we easily find
\[
\frac{s-2}{2s}\left\Vert \nabla v_{n}\right\Vert _{2}^{2}\leq c_{3}%
+\frac{c_{4}}{s}\left\Vert \nabla v_{n}\right\Vert _{2}+c_{5}\left\Vert
v_{n}\right\Vert _{2}^{2}%
\]
or, equivalently,%
\begin{equation}
\left\Vert v_{n}\right\Vert _{2}^{2}\geq c_{6}\left\Vert \nabla v_{n}%
\right\Vert _{2}^{2}-c_{7}\left\Vert \nabla v_{n}\right\Vert _{2}-c_{8}.
\label{priori}%
\end{equation}

Now we claim that $\left\{  v_{n}\right\}  $ is bounded in $H_{0}^{1}\left(
\Omega\right)  $. Otherwise, by (\ref{priori}), up to a subsequence, we have
\[
\left\Vert v_{n}\right\Vert _{2}^{2}\geq c_{9}\left\Vert \nabla v_{n}%
\right\Vert _{2}^{2}\rightarrow+\infty
\]
and then, by $\left(  \mathbf{G}_{2}\right)  $,%
\begin{align*}
J(v_{n})  &  \leq\frac{1}{2}\left\Vert \nabla v_{n}\right\Vert _{2}^{2}%
+\frac{m^{2}}{2}\Vert v_{n}\Vert_{2}^{2}+\frac{\Vert\Phi_{D}\Vert_{\infty}%
^{2}}{2}\Vert v_{n}\Vert_{2}^{2}-\int_{\Omega}G\left(  x,v_{n}\right)  dx\\
&  \leq c_{10}\Vert v_{n}\Vert_{2}^{2}-b_{1}\Vert v_{n}\Vert_{s}^{s}%
+b_{2}|\Omega|\rightarrow-\infty,
\end{align*}
which contradicts (\ref{PS1}).

Thus, we can assume that
\[
v_{n}\rightharpoonup v\text{ in }H_{0}^{1}\left(  \Omega\right)  .
\]
This convergence is strong. Indeed, by $\left(  \mathbf{g}_{1}\right)  $ and
the same arguments used in the proof of Proposition \ref{nice}, we get that
the right hand side of%
\[
\Delta v_{n}=m^{2}v_{n}-\left(  \varphi_{n}+\Phi_{D}\right)  ^{2}%
v_{n}-g\left(  x,v_{n}\right)  -J^{\prime}\left(  v_{n}\right)
\]
is bounded in $H^{-1}\left(  \Omega\right)  $.

Finally, by $\left(  \mathbf{G}_{2}\right)  $,
\begin{align*}
J\left(  v\right)   &  \leq\frac{1}{2}\left\Vert \nabla v\right\Vert _{2}%
^{2}+c_{1}\left\Vert v\right\Vert _{2}^{2}-\int_{\Omega}G(x,v)\,dx\\
&  \leq\frac{1}{2}\left\Vert \nabla v\right\Vert _{2}^{2}+c_{1}\left\Vert
v\right\Vert _{2}^{2}-b_{1}\left\Vert v\right\Vert _{s}^{s}+b_{2}\left\vert
\Omega\right\vert \rightarrow-\infty
\end{align*}
when $\left\Vert v\right\Vert \rightarrow+\infty$ on every finite dimensional subspace.
\end{proof}

The first part of Theorem \ref{thnonlin} follows from the classical Mountain
Pass Theorem. Indeed from $\left(  \mathbf{G}_{1}\right)  $ and Lemma
\ref{stimaphi} we deduce
\begin{align}
J\left(  v\right)   &  \geq\frac{1}{2}\left\Vert \nabla v\right\Vert _{2}%
^{2}+\frac{m^{2}}{2}\left\Vert v\right\Vert _{2}^{2}-\frac{\left\Vert \Phi
_{D}\right\Vert _{\infty}^{2}}{2}\left\Vert v\right\Vert _{2}^{2}%
-\frac{\varepsilon}{2}\left\Vert v\right\Vert _{2}^{2}-A\left\Vert
v\right\Vert _{p}^{p}\nonumber\\
&  \geq\frac{\lambda_{1}-\max\left\{  0,\left\Vert \Phi_{D}\right\Vert
_{\infty}^{2}-m^{2}\right\}  -\varepsilon}{2\lambda_{1}}\left\Vert \nabla
v\right\Vert _{2}^{2}-A^{\prime}\left\Vert \nabla v\right\Vert _{2}^{p},
\label{stimaJ1}%
\end{align}
with $A,A^{\prime}>0$ depending on $\varepsilon$. Taking into account
(\ref{hyp2}), if we choose $\varepsilon$ sufficiently small, we have
\begin{equation}
J\left(  v\right)  \geq c\left\Vert \nabla v\right\Vert _{2}^{2}-A^{\prime
}\left\Vert \nabla v\right\Vert _{2}^{p} \label{stimaJ2}%
\end{equation}
with $c>0$. Hence $J$ has a strict local minimum in $0$. By Lemma
\ref{PS lemma}, the claim immediately follows.

\begin{remark}
The proof of Lemma \ref{PS lemma} does not use any assumption on the value
$\omega^{2}-m^{2}$. Hence, if $\omega^{2}-m^{2}\neq\lambda_{k}$ (and $q$ is
sufficiently small) we conjecture that a critical point could be obtained by
means of some variant of the Mountain Pass Theorem.
\end{remark}

If $g$ is odd, the functional $J$ is even and we apply the following
$\mathbf{Z_{2}}$-Mountain Pass Theorem (see \cite{rab}).

\begin{theorem}
\label{Z2MP} Let $E$ be an infinite dimensional Banach space and let $I\in
C^{1}\left(  E,\mathbf{R}\right)  $ be even, satisfy the Palais-Smale
condition and $I\left(  0\right)  =0.$ If $E=V\oplus X,$ where $V$ is finite
dimensional and $I$ satisfies

\begin{enumerate}
\item there are constants $\rho,\alpha>0$ such that $\left.  I\right\vert
_{\partial B_{\rho}\cap X}\geq\alpha,$ and

\item for each finite dimensional subspace $\tilde{E}\subset E,$ there is an
$R=R(\tilde{E})$ such that $I\leq0$ on $E\setminus B_{R(\tilde{E})}$,
\end{enumerate}

\noindent then $I$ possesses an unbounded sequence of critical values.
\end{theorem}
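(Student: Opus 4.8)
The plan is to prove this $\mathbf{Z}_{2}$-symmetric Mountain Pass Theorem by the classical minimax method based on the Krasnoselskii genus $\gamma$, exploiting the evenness of $I$ to build an equivariant minimax scheme that produces a non-decreasing sequence of critical values $c_{1}\le c_{2}\le\cdots$ which I will show tends to $+\infty$. First I would recall the three properties of $\gamma$ that do all the work: \emph{monotonicity and subadditivity} ($\gamma(A)\le\gamma(B)$ if $A\subseteq B$, and $\gamma(A\cup B)\le\gamma(A)+\gamma(B)$); the \emph{neighbourhood property} (a compact symmetric set $K$ with $0\notin K$ has $\gamma(K)<\infty$ and admits a symmetric neighbourhood $N$ with $\gamma(\overline{N})=\gamma(K)$); and the \emph{intersection property}, namely that a symmetric set $A$ with $\gamma(A)>\dim V$ must meet $X$, obtained by composing with the linear, hence odd, projection $E\to V$.

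Next I would set up the minimax families. Fix an increasing sequence of finite dimensional subspaces $E_{n}\supseteq V$ with $\dim E_{n}=n$ and $\overline{\bigcup_{n}E_{n}}=E$; by hypothesis (2) choose radii $R_{n}$ so that $I\le0$ on $E_{n}\setminus B_{R_{n}}$. For each $j$ I would let the admissible class $\Gamma_{j}$ consist of the images $h(\overline{B_{R_{n}}}\cap E_{n})$, $n\ge j$, under continuous \emph{odd} maps $h$ that equal the identity on $\partial B_{R_{n}}\cap E_{n}$ (a pseudo-index of type $\ge j$ relative to $\partial B_{\rho}\cap X$), and define $c_{j}=\inf_{A\in\Gamma_{j}}\sup_{u\in A}I(u)$. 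The genus intersection property together with hypothesis (1) will give $c_{j}\ge\alpha>0$: any admissible set has a symmetric subset of genus $>\dim V$ that must cross $\partial B_{\rho}\cap X$, where $I\ge\alpha$. Monotonicity of the classes gives $c_{j}\le c_{j+1}$, while evaluating on the spheres $\partial B_{R_{n}}\cap E_{n}$, where $I\le0$, keeps each $c_{j}$ finite.

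Then I would run the standard critical-value argument. Using the Palais-Smale condition and the evenness of $I$, I construct an \emph{odd} pseudo-gradient field and the associated equivariant deformation lemma; if some $c_{j}$ were a regular value, the deformation would push an almost-optimal admissible set below $c_{j}$ while staying in $\Gamma_{j}$, contradicting the definition of the infimum. Hence every $c_{j}$ is a critical value. For unboundedness I would argue by the multiplicity estimate: for any $M$, the critical set $K=\{u:I'(u)=0,\ \alpha\le I(u)\le M\}$ is compact by (PS) and avoids $0$, so $\gamma(K)=\bar{k}<\infty$; taking a symmetric neighbourhood $N$ with $\gamma(\overline{N})=\bar{k}$ and deforming the sublevel set $\{I\le M\}\setminus N$ into $\{I\le\alpha-\varepsilon\}$, subadditivity forces any admissible $A$ with $\gamma(A)\ge\bar{k}+1$ to satisfy $\sup_{A}I>M$. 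Thus at most $\bar{k}$ of the $c_{j}$ can lie below $M$, and since $\bar{k}<\infty$ for every $M$, we obtain $c_{j}\to+\infty$, i.e. an unbounded sequence of critical values.

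The main obstacle will be the correct definition of the admissible class $\Gamma_{j}$: it must simultaneously be nonempty with finite supremum (handled by evaluating on large spheres in $E_{n}$, where $I\le0$) and be \emph{topologically large enough} that hypothesis (1) forces $c_{j}\ge\alpha$ --- this is exactly the role of the pseudo-index, that is, the linking with $\partial B_{\rho}\cap X$ and the intersection property. Ensuring that every equivariant deformation preserves membership in $\Gamma_{j}$, so that the genus bookkeeping in the unboundedness step remains valid, is the delicate part of the construction.
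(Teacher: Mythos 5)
First, a point of reference: the paper does not prove Theorem \ref{Z2MP} at all. It is quoted as a known result from Rabinowitz \cite{rab} and used as a black box in the proof of Theorem \ref{thnonlin}. So your proposal can only be compared with the standard proof in that reference, and your skeleton --- Krasnoselskii genus, minimax over images of large finite-dimensional balls under odd maps fixing the boundary spheres, equivariant deformation via an odd pseudo-gradient, multiplicity via compactness of the critical set --- is exactly that proof's skeleton.

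There is, however, a genuine gap in your final (unboundedness) step, and it traces back to your definition of the classes $\Gamma_{j}$. You take $\Gamma_{j}$ to consist of the images $h\left(\overline{B_{R_{n}}}\cap E_{n}\right)$, $n\geq j$, with $h$ odd and equal to the identity on $\partial B_{R_{n}}\cap E_{n}$. The classical classes are strictly larger: they consist of sets $h\left(\overline{B_{R_{n}}\cap E_{n}}\setminus Y\right)$ where $Y$ ranges over symmetric sets with $\gamma(Y)\leq n-j$. This excision allowance is precisely what yields the property that $A\in\Gamma_{j}$ and $Z$ closed symmetric with $\gamma(Z)\leq s<j$ imply $\overline{A\setminus Z}\in\Gamma_{j-s}$; without it, $\Gamma_{j}$ is not stable under removing the neighbourhood $N$ of the critical set, so the ``genus bookkeeping'' you appeal to cannot even get started. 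Worse, the claim you rest the step on --- that any admissible $A$ with $\gamma(A)\geq\bar{k}+1$ must satisfy $\sup_{A}I>M$ --- is false as stated, because genus alone does not see the linking: the spheres $\partial B_{R_{n}}\cap E_{n}$ have genus $n$, arbitrarily large, and yet $I\leq0$ on them by hypothesis (2). Hence, after you deform $A\setminus N$ into $\left\{ I\leq\alpha-\varepsilon\right\}$, subadditivity gives no contradiction, since subsets of $\left\{ I\leq\alpha-\varepsilon\right\}$ may have arbitrarily large genus. The correct argument is run through the minimax values, not through $\gamma(A)$: assuming $c_{j}\uparrow\bar{c}<\infty$, one sets $K=\left\{ u:I^{\prime}(u)=0,\ \alpha\leq I(u)\leq\bar{c}\right\}$, $\bar{k}=\gamma(K)$, picks $A\in\Gamma_{j+\bar{k}}$ with $\sup_{A}I\leq c_{j+\bar{k}}+\varepsilon$, uses the excision property to get $\overline{A\setminus N}\in\Gamma_{j}$, applies the (odd, class-preserving, single-level) deformation near $\bar{c}$, and contradicts the definition of $c_{j}$ as an infimum, i.e. $c_{j}\leq\bar{c}-\varepsilon<c_{j}$. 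Note also that this localized deformation avoids the separate difficulty hidden in your sketch, namely deforming across the whole interval $[\alpha-\varepsilon,M]$, which may contain many critical levels, while guaranteeing the flow does not re-enter $N$. With the classes enlarged by the excision sets and the contradiction routed through $c_{j}\geq\alpha$, your outline becomes the complete standard proof.
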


We have only to prove the first geometrical condition. We distinguish two cases.

\begin{enumerate}
\item[(a)] If $\left\Vert \Phi_{D}\right\Vert _{\infty}^{2}-m^{2}<\lambda_{1}%
$, we proceed as in (\ref{stimaJ1}) and (\ref{stimaJ2}) and we obtain that $J$
has a strict local minimum in $0$. Applying Theorem \ref{Z2MP} with
$V=\left\{  0\right\}  $ we infer the existence of infinitely many solutions
{$v$}${_{i}}$.

\item[(b)] If $\lambda_{1}\leq\left\Vert \Phi_{D}\right\Vert _{\infty}%
^{2}-m^{2}$, denoted with $\left\{  \lambda_{j}\right\}  $ the eigenvalues of
$-\Delta$ with Dirichlet boundary condition, $M_{j}$ the (finite dimensional)
corresponding eigenspaces and
\[
k=\min\left\{  j\in\mathbf{N}:\left\Vert \Phi_{D}\right\Vert _{\infty}%
^{2}-m^{2}<\lambda_{j}\right\}  ,
\]
we consider
\[
V=\bigoplus_{j=1}^{k-1}M_{j},\qquad X=V^{\perp}=\overline{\bigoplus
_{j=k}^{+\infty}M_{j}}.
\]

Since
\[
\lambda_{k}=\min\left\{  \frac{\left\Vert \nabla v\right\Vert _{2}^{2}%
}{\left\Vert v\right\Vert _{2}^{2}}:v\in X,\;v\neq0\right\}  ,
\]
for every $v\in X$ we have that
\[
J\left(  v\right)  \geq\frac{\lambda_{k}-\left(  \left\Vert \Phi
_{D}\right\Vert _{\infty}^{2}-m^{2}\right)  }{2\lambda_{k}}\left\Vert \nabla
v\right\Vert _{2}^{2}-\int_{\Omega}G(x,v)\,dx.
\]
Thus, arguing as before, $J$ is strictly positive on a sphere in $X$ and we
obtain the existence of infinitely many finite energy solutions $v_{i}$.
\end{enumerate}

In both cases we have $J(v_{i})\rightarrow+\infty$. To complete the proof of
Theorem \ref{thnonlin}, we simply notice that, by Lemma \ref{stimaphi} and
$\left(  \mathbf{G}_{1}\right)  $,%
\begin{align*}
J\left(  v_{i}\right)   &  =\frac{1}{2}\left\Vert \nabla v_{i}\right\Vert
_{2}^{2}+\frac{m^{2}}{2}\left\Vert v_{i}\right\Vert _{2}^{2}-\frac{1}{2}%
\int_{\Omega}\Phi_{D}\left(  \varphi_{i}+\Phi_{D}\right)  v_{i}^{2}%
\,dx-\int_{\Omega}G\left(  x,v_{i}\right)  \,dx\\
&  \leq c_{1}\Vert\nabla v_{i}\Vert_{2}^{2}+c_{3}\left\Vert \nabla
v_{i}\right\Vert _{2}^{p}.
\end{align*}

\end{document}